\documentclass[12pt]{amsart}
\usepackage{amsfonts}
\usepackage{amssymb}
\usepackage{amsmath}
\usepackage{amsthm}
\usepackage{enumerate}
\usepackage[all]{xy}
\usepackage{graphicx}
\usepackage{appendix}
\newtheorem{theorem}{Theorem}
\newtheorem{corollary}{Corollary}
\newtheorem{lemma}{Lemma}

\newtheorem{conjecture}{Conjecture}

\newtheorem{theo}{Theorem}
\newcounter{tmp}

\theoremstyle{definition}
\newtheorem{definition}{Definition}

\def\C{\mathbb{C}}

\def\N{\mathbb{N}}

\def\R{\mathbb{R}}

\begin{document}

\title[Interval Map]{Zero Entropy Interval Maps And MMLS-MMA Property} 

\author{Yunping JIANG}

\address{ Department of Mathematics, 
Queens College of the City University of New York,
Flushing, NY 11367-1597 and 
Department of Mathematics
Graduate School of the City University of New York
365 Fifth Avenue, New York, NY 10016}
 
\email[]{yunping.jiang@qc.cuny.edu}
 
 \subjclass[2010]{Primary 11K65, 37A35, Secondary 37A25, 11N05}

\keywords{minimally mean-attractable (MMA), minimally mean-L-stable (MMLS), interval map with zero topological entropy, linear disjointness, M\"obius function, oscillating sequence, Sarnak's conjecture}

\thanks{The author is partially supported by an award from NSF (grant number DMS-1747905), a collaboration grant from the Simons Foundation (grant number 523341), PSC-CUNY awards, and a grant from NSFC (grant number 11571122).}

\begin{abstract}
We prove that the flow generated by any interval map with zero topological entropy is minimally 
mean-attractable (MMA) and minimally mean-L-stable (MMLS). One of the consequences is that 
any oscillating sequence is linearly disjoint with all flows generated by interval maps with zero topological entropy. 
In particular, the M\"obius function is orthogonal to all flows generated by interval maps with zero topological entropy (Sarnak's conjecture for interval maps).
Another consequence is a non-trivial example of a flow having the discrete spectrum.
 \end{abstract}

\maketitle

\section{\bf Introduction}
 
Let $\C$ and $\R$ denote the complex plane and the real line. Let $\mathbb{N}=\{1, 2, 3, \cdots, n\,\cdots\}$ be the set of natural numbers. 
Suppose $X$ is a compact metric space with metric $d (\cdot, \cdot)$. Let $C(X, \C)$ be the space of all continuous functions $\varphi: X\to \C$. 
Consider a continuous map $f: X\to X$. Then $f$ generates a flow (i.e. dynamical system) 
$$
{\mathcal X}=\{ f^{n}: X\to X\}_{n\in \{0\}\cup \mathbb{N}}
$$ 
where $f^{n}$ means the $n^{th}$ iteration 
$$
\stackrel{\underbrace{f\circ f\circ \cdots \circ f}}{n}
$$ 
and $f^{0}$ means the idenity.

The M\"obius function is the arithmetic function $\mu (n): \mathbb{N}\to \{ -1, 0, 1\}$ defined as 
$$
\mu (n) =\left\{ \begin{array}{ll}
                        1 & \hbox{ if $n=1$};\\
                        (-1)^r & \hbox{ if $n=p_{1}\cdots p_{r}$ for $r$ distinct prime numbers $p_{i}$};\\
                        0 & \hbox{ if $p^{2}|n$ for a prime number $p$.}
                        \end{array}\right.
$$
Sarnak proposed a conjecture which makes a connection between number theory and ergodic theory. The conjecture can be stated as 

\medskip
\begin{conjecture}[Sarnak~\cite{S1,S2}]~\label{sc}
The M\"obius function is linearly disjoint from (i.e. orthogonal to) all flows with zero topological entropy.
\end{conjecture}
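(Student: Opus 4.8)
The plan is to detach the number-theoretic input from the dynamics and reduce Möbius orthogonality to a family of purely dynamical bilinear correlation estimates that zero entropy should kill. Fix a continuous $f\colon X\to X$ of zero topological entropy, a point $x\in X$, and $\varphi\in C(X,\C)$, and set $a(n)=\varphi(f^{n}x)$, a bounded sequence. Linear disjointness of $\mu$ from the flow $\mathcal X$ is the assertion that
\[
\frac{1}{N}\sum_{n=1}^{N}\mu(n)\,a(n)\longrightarrow 0 \qquad (N\to\infty)
\]
for every such $\varphi$ and $x$. Since $\mu$ is bounded and multiplicative, the natural engine is the K\'atai--Bourgain--Sarnak--Ziegler (BSZ) criterion: if for all distinct primes $p,q$ one has
\[
\frac{1}{N}\sum_{n=1}^{N}a(pn)\,\overline{a(qn)}\longrightarrow 0,
\]
then $a$ is orthogonal to every bounded multiplicative function, $\mu$ included. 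The first step is thus to discard $\mu$ entirely and replace the problem by the dynamical task of estimating these averaged correlations.

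Second, I would read each correlation as a Birkhoff average on a product system. Writing $a(pn)\overline{a(qn)}=(\varphi\otimes\overline{\varphi})\big((f^{p}\times f^{q})^{n}(x,x)\big)$, the sum
\[
\frac{1}{N}\sum_{n=1}^{N}a(pn)\overline{a(qn)}
\]
is a Ces\`aro average of $\varphi\otimes\overline{\varphi}$ along the orbit of the diagonal point $(x,x)$ under the commuting map $f^{p}\times f^{q}$. Its limit points are governed by the weak-star limits of the empirical measures of these diagonal orbits, that is, by the joinings of $(X,f^{p})$ and $(X,f^{q})$. The goal becomes to show that for distinct $p,q$ these limiting measures are trivial on the relevant component, so that the correlation factors through the separate Ces\`aro averages of $\varphi$ along $f^{p}$ and along $f^{q}$, which forces cancellation (or outright vanishing when one of these averages is zero).

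Third, I would bring in zero entropy through structure theory. Passing to invariant measures via the variational principle, zero topological entropy forces every ergodic component to have zero Kolmogorov--Sinai entropy, hence to be measure-theoretically deterministic with trivial Pinsker factor. On the structured, distal, or nilfactor part the diagonal orbits equidistribute toward nilsystem/Haar data for which the required decorrelation of $f^{p}$ against $f^{q}$ is available (as in the Green--Tao treatment of nilsequences), while the complementary, entropy-free part should contribute nothing to the bilinear average. Sarnak's own observation that his conjecture follows from Chowla's conjecture on the self-correlations of $\mu$ furnishes a parallel route, trading dynamical decorrelation for an arithmetic one; either route aims at the same cancellation.

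The main obstacle is precisely the \emph{uniformity} of this decorrelation across all zero-entropy flows. Zero entropy is far weaker than the rigidity (unique ergodicity, finite rank, or explicit algebraic structure) under which the correlation estimate is actually known; deterministic systems can still be combinatorially intricate, as Toeplitz, rank-one, and related constructions show, and at present there is no general mechanism guaranteeing that the diagonal joinings of $(X,f^{p})$ and $(X,f^{q})$ are trivial for an arbitrary such system. Controlling these multiplicative correlations uniformly is exactly the wall at which the conjecture in full generality remains open, and it is what motivates restricting the flow to be generated by an interval map, whose zero-entropy dynamics obey a rigid Sharkovskii-type combinatorial structure that the MMA/MMLS framework is designed to exploit.
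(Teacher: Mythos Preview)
The statement you are trying to prove is Sarnak's \emph{conjecture}: the paper does not prove it, and neither does your proposal. You yourself identify the gap. Your strategy --- reduce via the K\'atai--Bourgain--Sarnak--Ziegler criterion to bilinear correlations $\frac{1}{N}\sum_{n\le N}\varphi(f^{pn}x)\overline{\varphi(f^{qn}x)}$ and then argue that zero entropy forces these to vanish --- breaks down exactly where you say it does: there is no known mechanism that controls the relevant joinings of $(X,f^{p})$ and $(X,f^{q})$ for an arbitrary zero-entropy system. Zero measure-theoretic entropy and trivial Pinsker factor do not by themselves yield the required disjointness; deterministic systems can have highly nontrivial self-joinings. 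So what you have written is an informed outline of why the problem is hard, not a proof.

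The paper takes an entirely different route and proves only the special case of interval maps. It never touches the BSZ criterion or joinings. Instead it shows, using the combinatorial structure of zero-entropy interval maps (periods are powers of $2$, and infinite $\omega$-limit sets sit inside a nested dyadic system of intervals coded by the adding machine), that every such flow is minimally mean-attractable and minimally mean-L-stable. The linear disjointness of $\mu$ then follows from the authors' earlier Theorem~A (MMA\,+\,MMLS implies disjointness from every oscillating sequence) together with Davenport's estimate. Your final paragraph correctly anticipates that the interval-map restriction is what makes the argument go through; the body of your proposal, however, is aimed at the full conjecture and does not close.
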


The linear disjointness can be defined more generally. 
Suppose ${\bf c}=(c_{n})_{n\in \N}$ is a sequence of complex numbers. 

\medskip
\begin{definition}[Linear Disjointness]
We say ${\bf c}$ is {\em linearly disjoint} from $\mathcal{X}$ if
\begin{equation}\label{disjointness}
   \lim_{N\to \infty} \frac{1}{N}\sum_{n=1}^N c_n \varphi (f^{n}x) =0
\end{equation}
for any $\varphi\in C(X, \C)$ and any $x\in X$.
\end{definition}

In order to understand Conjecture~\ref{sc} in ergodic theory, we gave the following definition in~\cite{FJ}. 
 
 \medskip
\begin{definition}[Oscillating Sequence]~\label{os}
The sequence ${\bf c}$ is said to be an oscillating sequence if there is a constant $\lambda >1$ such that 
\begin{equation}~\label{cn}
K=\lim_{N\to \infty} \frac{1}{N} \sum_{n=1}^{N} |c_{n}|^{\lambda} <\infty
\end{equation} and if 
\begin{equation}~\label{oseq}
\lim_{N\to \infty} \frac{1}{N} \sum_{n=1}^{N} c_{n}e^{2\pi i n t}=0, \;\; \forall \; 0\leq t<1.
\end{equation}
\end{definition}

An important example of an oscillating sequence is the M\"obius sequence ${\bf u}=(\mu (n))_{n\in \mathbb{N}}$  due to Davenport's theorem~\cite{Da}. 

Mean-L-stable flows are an important class of dynamical systems in ergodic theory. They are defined as follows.  
Suppose $E\subset \mathbb{N}$. The upper density $\overline{D}(E)$ of $E$ in $\mathbb{N}$ is  
$$
\overline{D}(E) =\limsup_{N\to \infty} \frac{\#(E\cap [1, N])}{N}.
$$

 \medskip
\begin{definition}[MLS]~\label{mma}
A flow $\mathcal{X}$ is said to be {\em mean-L-stable} (MLS)
if for every $\epsilon > 0$, there is a $\delta > 0$ such that
$d(x, y) <\delta$ implies $d(f^nx,f^ny) <\epsilon$ for all $n\in \mathbb{N}$ except
a subset $E$ of natural numbers with upper density $\overline{D}(E)$ less than $\epsilon$.
\end{definition}

An example of a MLS flow is an equicontinuous flow.

 \medskip
\begin{definition}[Equicontinuity]~\label{mma}
A flow $\mathcal{X}$ is said to be {\em equicontinuous}
if for every $\epsilon > 0$, there is a $\delta > 0$ such that
$d(x, y) <\delta$ implies $d(f^nx,f^ny) <\epsilon$ for all $n\in \mathbb{N}$.
\end{definition}

We says a closed subset $K\subset X$ is minimal (with respect to $f$) if $f(K)\subset K$ and if $\overline{\{f^{n}x\;|\; n\in \mathbb{N}\}}=K$ for every $x\in K$. 
We use ${\mathcal K}$ to denote the sub-flow 
$
\{ (f|K)^{n}: K\to K\}_{n\in \{0\}\cup  \mathbb{N}}.
$
Again, for the purpose of understanding Conjecture~\ref{sc} in ergodic theory, 
we gave the following definitions in~\cite{FJ}. 
 
 \medskip
\begin{definition}[MMLS]~\label{mmls}
We say that a flow $\mathcal{X}$ is {\em minimally MLS} (MMLS) if
for every minimal subset $K\subseteq X$, the sub-flow $\mathcal{K}$ is MLS.
\end{definition}
 
\medskip
\begin{definition}[MMA]~\label{mma}
Suppose $\mathcal{X}$ is a flow. Suppose $K\subset X$ is minimal.
We say $x\in X$ is {\em mean-attracted} to $K$ if for any $\epsilon >0$ there is a point $z=z_{\epsilon, x}\in K$
(depending on $x$ and $\epsilon$) such that
\begin{equation}\label{MA}
    \limsup_{N\to\infty} \frac{1}{N} \sum_{n=1}^{N} d(f^n x, f^n z) <\epsilon.
\end{equation}
The {\em basin of mean-attraction} of $K$, denoted $\hbox{\rm Basin}(K)$, is defined to be the set of all
points $x$ which are mean-attracted to $K$. It is trivial that
$K \subset \hbox{\rm Basin}(K)$.
We call $\mathcal{X}$ {\em minimally mean-attractable} (MMA) if
\begin{equation}\label{Decomp}
   X= \bigcup_{K} \mbox{\rm Basin}(K)
\end{equation}
where $K$ varies among all minimal subsets of $X$.
\end{definition}

Recall that a point $x\in X$ is attracted to $K$ if
$$
\lim_{n\to \infty} d(f^n x, K)=0.
$$
In general, this does not imply to that $x$ is mean-attracted to $K$.

We proved the following theorem in~\cite{FJ}.

\begingroup
\setcounter{tmp}{\value{theo}}
\setcounter{theo}{0} 
\renewcommand\thetheo{\Alph{theo}}

\medskip
\begin{theo}[MMA/MMLS and Disjointness]~\label{fjthm}
Any oscillating sequence ${\bf c}$ is linearly disjoint from all MMA and MMLS flows $\mathcal{X}$.
Moreover, the limit in (\ref{disjointness}) is uniform on every minimal subset $K$.
\end{theo}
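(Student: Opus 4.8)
The plan is to reduce the general assertion to the case where $x$ already lies in a minimal set and the sub-flow is MLS, and then to prove disjointness there using the oscillation hypothesis (\ref{oseq}) together with an equicontinuity-type argument on a large-measure set of times. Fix $\varphi\in C(X,\C)$ and $x\in X$. Since $\mathcal{X}$ is MMA, by (\ref{Decomp}) there is a minimal set $K$ with $x\in\hbox{\rm Basin}(K)$, so given $\epsilon>0$ we may pick $z=z_{\epsilon,x}\in K$ with $\limsup_N \frac1N\sum_{n=1}^N d(f^nx,f^nz)<\epsilon$. Because $\varphi$ is uniformly continuous on the compact space $X$, a standard estimate bounds $\big|\frac1N\sum_{n=1}^N c_n\varphi(f^nx)-\frac1N\sum_{n=1}^N c_n\varphi(f^nz)\big|$ in terms of the modulus of continuity of $\varphi$ evaluated on the Cesàro average of $d(f^nx,f^nz)$, using the $L^\lambda$-bound (\ref{cn}) on $(c_n)$ via Hölder's inequality to control the contribution of the (few) times where $d(f^nx,f^nz)$ is not small. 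Hence it suffices to prove (\ref{disjointness}) for the point $z\in K$, i.e. to establish the statement on minimal MLS sub-flows, and uniformly in the base point there.

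So I would now assume $\mathcal{X}=\mathcal{K}$ is minimal and MLS and prove $\frac1N\sum_{n=1}^N c_n\varphi(f^nx)\to 0$ uniformly in $x\in K$. The key structural input is that a minimal MLS flow is, up to a controlled small-density set of "bad" times, equicontinuous: by Definition~\ref{mma} (MLS), for each $\eta>0$ there is $\delta>0$ so that $d(x,y)<\delta$ forces $d(f^nx,f^ny)<\eta$ off a set $E_{x,y}$ of upper density $<\eta$. Using compactness of $K$, choose a finite $\delta$-net and exploit minimality; one shows that $\mathcal{K}$ has zero topological entropy and is a (measure-theoretic, and in fact almost-everywhere topological) extension of its maximal equicontinuous factor, which is a rotation on a compact abelian group. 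Concretely, I would build a factor map $\pi\colon K\to G$ onto an equicontinuous (rotation) system, show $\varphi$ can be approximated uniformly off a small-density set of times by $\psi\circ\pi$ for a continuous $\psi$ on $G$, and then invoke (\ref{oseq}): averages $\frac1N\sum c_n\, e^{2\pi i n t}$ kill every eigenfunction, and by Fourier/Stone--Weierstrass on $G$ the averages $\frac1N\sum c_n\,(\psi\circ\pi)(f^nx)$ tend to $0$, uniformly in $x$ since $G$ is compact. The leftover bad-times contribution is again handled by Hölder with (\ref{cn}).

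The main obstacle I anticipate is precisely the passage from MLS to "equicontinuous off a uniformly small-density set of times, with uniform constants in the base point" — that is, upgrading the pointwise, pair-dependent exceptional sets $E_{x,y}$ in Definition~\ref{mma} to a single exceptional set of controlled density valid simultaneously across a net of points, and identifying the resulting almost-equicontinuous structure with a group rotation factor. This requires a careful compactness/diagonal argument (and a Fatou-type control to keep the total density of the union of exceptional sets small) and is the technical heart of the theorem; everything else — the two Hölder reductions and the Fourier-analytic use of (\ref{oseq}) on the rotation factor — is routine once this structure is in hand. I would organize the write-up as: (i) reduction to minimal MLS sub-flows via the basin and Hölder; (ii) the structure theorem for minimal MLS flows (group-rotation factor off small-density times); (iii) conclusion via (\ref{oseq}), Stone--Weierstrass on the group, and a final Hölder estimate, noting that all bounds are uniform on $K$.
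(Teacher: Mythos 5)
This theorem is not actually proved in the present paper: it is quoted from~\cite{FJ}, so there is no in-paper proof to compare against, and your proposal must be judged on its own terms. Its outer architecture is right. Step (i) --- reducing from an arbitrary $x$ to a point $z$ in the minimal set via the basin of mean-attraction, then using H\"older with exponents $\lambda$ and $\lambda/(\lambda-1)$ together with (\ref{cn}) and the modulus of continuity of $\varphi$ to control the replacement error --- is exactly the correct first move. Step (iii) is also sound: once $(\varphi(f^nz))_{n}$ is approximated, in a Besicovitch-type time-average sense and uniformly in $z\in K$, by a finite trigonometric polynomial $\sum_j a_j e^{2\pi i n t_j}$, condition (\ref{oseq}) annihilates each exponential and a final H\"older estimate absorbs the remainder.

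The genuine gap is step (ii). You assert, but do not prove, that a minimal MLS flow admits a compact group-rotation factor $\pi\colon K\to G$ such that every $\varphi\in C(K)$ is approximated by $\psi\circ\pi$ uniformly off a small-density set of times along \emph{every} orbit; you flag this yourself as the technical heart, and it is: without it nothing connects the MLS hypothesis to the exponentials in (\ref{oseq}), so the proposal as written proves only the easy reductions. The difficulty you name --- upgrading the pair-dependent exceptional sets in the definition of MLS to a single controlled-density set across a $\delta$-net, and controlling every orbit rather than $\mu$-a.e.\ orbit --- is exactly the content of the theorem, and no argument is supplied for it. For comparison, the proof in~\cite{FJ} takes a related but distinct route that avoids constructing a topological group factor: a minimal MLS flow is first shown to be uniquely ergodic, then the Li--Tu--Ye theorem (\cite[Theorem 3.8]{LTY}, the same result this paper invokes for Corollary~\ref{dsp}) gives discrete spectrum for the unique invariant measure; expanding $\varphi$ along $L^{2}(\mu)$-eigenfunctions and combining the Birkhoff ergodic theorem with the MLS property converts that $L^{2}$ expansion into the required trigonometric-polynomial approximation of $(\varphi(f^nz))_{n}$, valid for all $z\in K$ and uniformly. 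Your plan could plausibly be completed along similar lines, but the decisive step currently stands as an assertion rather than an argument.
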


\endgroup
\setcounter{theo}{\thetmp} 

Since a MMA and MMLS flow has zero topological entropy (for example, this follows from~\cite{LTY}),  
this theorem confirms Sarnak's conjecture for all MMA and MMLS flows.  We provided some examples 
of flows which are MMA and MMLS in~\cite{FJ}. However, these examples except 
for Denjoy counter-examples are equicontinuous when they are restricted on their minimal subsets. 
It is an interesting problem to find further examples which are MMA and MMLS but not equicontinuous 
when they are restricted on their minimal subsets. 
We give a complete answer to this problem for interval maps with zero topological entropy in this paper. 
The main result in this paper is that  

\medskip
\begin{theorem}[Main Theorem]~\label{mainthm}
Suppose $I=[a, b]$ is a closed interval. Suppose $f: I\to I$ is an interval map with zero topological entropy. Then the flow ${\mathcal X}$ generated by $f$ 
is a MMA and MMLS flow. 
\end{theorem}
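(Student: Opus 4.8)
The plan is to reduce everything to the classical Sharkovskii-type structure theory of zero-entropy interval maps. Recall that $f\colon I\to I$ has zero topological entropy if and only if every periodic orbit has period a power of $2$, and in that case $I$ admits a nested sequence of $f$-invariant ``periodic intervals'' realizing the $2$-adic adding machine as a factor: there are closed intervals organized into cycles $J_k^{(0)},\dots,J_k^{(2^k-1)}$ for each $k$, each of length tending to $0$ (after discarding a ``wandering'' part), with $f$ permuting the level-$k$ intervals cyclically and $f^{2^k}$ mapping each level-$k$ interval into itself, and with the level-$(k+1)$ intervals refining the level-$k$ ones two-to-one. The first step is to establish (or quote) this decomposition carefully, including the dichotomy that the intersection $\bigcap_k\big(\bigcup_j J_k^{(j)}\big)$ is either a Cantor set (the ``solenoidal'' case, where $f$ restricted to the $\omega$-limit set is a minimal $2^\infty$-adding-machine extension) or eventually the cycles stabilize at some finite level and the recurrent dynamics lives on finitely many periodic intervals on each of which some power of $f$ is ``Li--Yorke-trivial'' in the relevant sense.

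The second step is to identify the minimal subsets $K\subseteq I$. By the structure theory, every minimal set of a zero-entropy interval map is either a periodic orbit or a ``solenoidal'' minimal set $Q=\bigcap_k\big(\bigcup_j J_k^{(j)}\big)$ on which $f|_Q$ is conjugate to (hence is) an adding machine, which is equicontinuous; a periodic orbit is trivially equicontinuous. Either way $f|_K$ is equicontinuous, so in particular MLS, which gives MMLS. This is the easy half once the structure theory is in hand: MMLS holds because \emph{all} minimal subsets of a zero-entropy interval map carry equicontinuous dynamics.

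The third and substantive step is MMA: I must show every $x\in I$ is mean-attracted to some minimal set. Fix $x$ and $\epsilon>0$. Choose $k$ so large that all level-$k$ intervals have length $<\epsilon$. The orbit of $x$ eventually enters the cyclic union $\bigcup_j J_k^{(j)}$ of level-$k$ periodic intervals (points outside the recurrent region are attracted to it: this is where one uses that a zero-entropy interval map has no ``wandering intervals'' obstruction of the bad kind, or more precisely that the orbit of any point accumulates on the recurrent set, which itself sits inside every level's cyclic union), say from time $n_0$ on it tracks the cyclic itinerary of the adding machine at level $k$. Pick any point $z$ in the minimal set $K$ sitting inside the level-$k$ cycle through which $x$'s orbit eventually travels (in the solenoidal case take $z\in Q$; in the periodic case $z$ on the attracting periodic orbit). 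For $n\geq n_0$, $f^nx$ and $f^nz$ lie in the same level-$k$ interval, so $d(f^nx,f^nz)<\epsilon$ for all but finitely many $n$, whence $\limsup_N \frac1N\sum_{n=1}^N d(f^nx,f^nz)<\epsilon$. Letting $\epsilon\to0$ and organizing the choices of $K$ (they are nested/compatible as $\epsilon$ shrinks, converging to a single minimal set) shows $x\in\mathrm{Basin}(K)$, giving $I=\bigcup_K\mathrm{Basin}(K)$.

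The main obstacle is the third step, and specifically the claim that \emph{every} orbit is eventually trapped in the level-$k$ cyclic union for every $k$ — i.e. that there is no positive-measure or topologically large set of points whose orbit fails to settle into the adding-machine itinerary. This is exactly the delicate point in one-dimensional dynamics: one must rule out the pathology of orbits that oscillate forever between different ``branches'' without converging, using the absence of homoclinic/Li--Yorke-scrambled behavior forced by zero entropy, together with the interval order structure (an orbit crossing between two disjoint periodic intervals infinitely often would force, via the intermediate value theorem and Sharkovskii's theorem, a periodic point of non-power-of-$2$ period, contradicting zero entropy). Making this trapping argument quantitative enough to control the Cesàro average — rather than merely the $\limsup$ of $d(f^nx,K)$, which as the excerpt warns is \emph{not} enough for mean-attraction — is the crux; the key is that once trapped at level $k$ the discrepancy $d(f^nx,f^nz)$ is bounded by the (small) length of a level-$k$ interval for \emph{all} sufficiently large $n$, not just density-one many $n$, so the Cesàro bound is immediate. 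I would also need to handle the boundary/degenerate cases (maps with finitely many periodic intervals, constant maps on subintervals, the interval endpoints) separately but these are routine.
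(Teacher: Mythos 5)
There is a genuine gap, and it is exactly at the point the paper identifies as its main contribution. Your argument rests on the assumption that the level-$k$ cycles of periodic intervals can be chosen with diameters tending to $0$ (``each of length tending to $0$, after discarding a wandering part''), and consequently that every infinite minimal set is conjugate to the adding machine and hence equicontinuous. Neither claim is true. Since the $2^{k}$ level-$k$ intervals are pairwise disjoint in $I$, their \emph{average} length tends to $0$, but their \emph{maximum} length need not: the limit fibers $I_{w}=\bigcap_{n}I_{w_{n}}$ can be nondegenerate closed intervals (only countably many, with $\sum_{w}|I_{w}|\le |I|$, so for each $\epsilon$ only finitely many of length $\ge\epsilon$ --- but those finitely many cannot be discarded, since the minimal set $K$ meets every fiber). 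In that situation the paper proves (Lemma~\ref{IIbeq}) that $f|K$ is \emph{not} equicontinuous: one finds $u,v\in K$ arbitrarily close with $|f^{n}u-f^{n}v|\ge |I_{w_{0}}|$ for infinitely many $n$. So your Step 2 (``MMLS holds because all minimal subsets carry equicontinuous dynamics'') is false, and your Step 3 (``$f^{n}x$ and $f^{n}z$ lie in the same level-$k$ interval of length $<\epsilon$ for all large $n$'') collapses for the same reason. Your proposal in effect only covers the paper's easy cases: periodic orbits and the case where every fiber is a single point (Case (II)(a)), where your argument does match the paper's.

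The missing idea is a density argument rather than a uniform one. In the hard case the paper fixes $\epsilon>0$, isolates the finitely many ($m_{0}\le |I|/\epsilon+1$) ``fat'' fibers $I_{w^{j}}$ of length $\ge\epsilon$, and uses the unique ergodicity of the adding machine (via the Birkhoff ergodic theorem for the standard measure $\mu$ with $\mu([w]_{N})=2^{-N}$) to show that the set $E$ of times at which a given orbit visits the cylinders of those fat fibers has upper density $<\epsilon$. Off $E$ the two orbits lie in a common thin fiber and are $\epsilon$-close; on $E$ the separation is merely bounded by $|I|$, which the Ces\`aro average absorbs because $\overline{D}(E)$ is small. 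This yields MLS (not equicontinuity) on the minimal set and, with the same bookkeeping plus a first hitting time, mean-attraction of arbitrary points. You correctly flag that a $\limsup$ on $d(f^{n}x,K)$ is not enough and that one needs control for a large set of times, but you place the difficulty in the ``trapping'' of orbits into the cycles (which the standard structure theorem already gives) rather than where it actually lies, namely in the failure of the fibers to shrink.
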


One consequence of the main result (Theorem~\ref{mainthm}) with Theorem~\ref{fjthm} is

\medskip
\begin{corollary}[Interval Map and Disjointness]~\label{id}
Any oscillating sequence ${\bf c}$ is linearly disjoint from all flows ${\mathcal X}$ generated by interval maps $f: I\to I$ with zero topological entropy.
\end{corollary}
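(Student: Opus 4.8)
The plan is to read the corollary off the two results already in hand. By the Main Theorem (Theorem~\ref{mainthm}), for any closed interval $I=[a,b]$ and any interval map $f\colon I\to I$ with $h_{\mathrm{top}}(f)=0$, the flow $\mathcal{X}$ generated by $f$ is simultaneously MMA and MMLS; by Theorem~\ref{fjthm}, every oscillating sequence $\mathbf{c}$ is linearly disjoint from every flow that is both MMA and MMLS, and the limit in \eqref{disjointness} is moreover uniform on each minimal subset. Concatenating the two gives $\lim_{N\to\infty}\frac1N\sum_{n=1}^{N}c_n\varphi(f^nx)=0$ for every $\varphi\in C(I,\C)$ and every $x\in I$, which is precisely the assertion of the corollary. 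So once Theorem~\ref{mainthm} is available the corollary is immediate, and the remarks below only indicate how I would prove Theorem~\ref{mainthm}, on which everything rests.

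For Theorem~\ref{mainthm} I would first bring in the structure theory of zero-entropy interval maps (Sharkovskii, Misiurewicz, Sm\'{\i}tal). The hypothesis $h_{\mathrm{top}}(f)=0$ forces every periodic point of $f$ to have period a power of $2$, and this yields a dichotomy. If the set of periods is bounded, say every period divides $2^N$, then every periodic point of $f^{2^N}$ is a fixed point; since a continuous self-map of a compact interval whose only periodic points are fixed points has every orbit convergent to a fixed point, every $f$-orbit converges to a periodic orbit and every $\omega$-limit set of $f$ is a single periodic orbit. If the set of periods is unbounded, there is a nested sequence of periodic intervals $I=J_0\supset J_1\supset J_2\supset\cdots$ with $J_k$ of $f$-period $2^k$, whose orbit $Q_k=\bigcup_j f^j(J_k)$ consists of $2^k$ pairwise disjoint closed intervals cyclically permuted by $f$, and every $\omega$-limit set that is not a periodic orbit is a solenoidal set lying in $Q_k$ for every $k$ and containing a unique minimal subset on which $f$ acts as an adding machine.

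From this picture MMLS is the easier half: every minimal subset $K$ of $I$ is either a periodic orbit or one of these adding machines, and in both cases the subflow $\mathcal{K}$ is equicontinuous, hence MLS, so $\mathcal{X}$ is MMLS. The substantial step is MMA, i.e.\ $I=\bigcup_K\mathrm{Basin}(K)$. Fix $x\in I$ and $\epsilon>0$; I must produce a single $z$ in some minimal $K$ with $\limsup_N\frac1N\sum_{n\le N}d(f^nx,f^nz)<\epsilon$. If $\omega(x)$ is a periodic orbit $K$, then $d(f^nx,K)\to0$ and choosing $z\in K$ with the right phase makes $d(f^nx,f^nz)\to0$, so the Ces\`{a}ro average vanishes. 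If $\omega(x)$ is solenoidal, then for each $k$ the orbit of $x$ eventually enters $Q_k$ and thereafter runs through its $2^k$ components in exactly the adding-machine pattern; picking $z$ in the matching level-$k$ cylinder of the minimal set $K$, the point $d(f^nx,f^nz)$ is at most the diameter of a level-$k$ component of $Q_k$ for all large $n$ off a set of small upper density, and letting $k\to\infty$ pushes the $\limsup$ of the average below $\epsilon$.

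The hard part will be the quantitative bookkeeping in the solenoidal case: a priori the components of $Q_k$ need not all shrink to points as $k\to\infty$, so one must show that the set of times $n$ at which $f^nx$ sits in a ``large'' level-$k$ component has upper density tending to $0$. Here the zero-entropy hypothesis has to be used in an essential way --- through the absence of horseshoes, or equivalently through the Misiurewicz--Sm\'{\i}tal analysis of $\omega$-limit sets together with the elementary bound that, since the $2^k$ components of $Q_k$ are pairwise disjoint subintervals of $I$, only boundedly many of them (a bound independent of $k$) can have diameter exceeding a prescribed $\eta>0$, while the adding machine visits each component with frequency $2^{-k}$ --- rather than as a mere formality. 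Once that density estimate is in place, MMA and the uniformity assertion of Theorem~\ref{fjthm} follow, and Corollary~\ref{id} drops out.
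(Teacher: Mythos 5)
Your derivation of the corollary itself is correct and is exactly the paper's: the statement follows immediately by combining Theorem~\ref{mainthm} (the flow is MMA and MMLS) with Theorem~\ref{fjthm} (oscillating sequences are linearly disjoint from all MMA and MMLS flows), and the paper offers nothing more than this concatenation.

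One caution about your supplementary sketch of Theorem~\ref{mainthm}, since you wrote it out: the claim that in the solenoidal case the subflow $\mathcal{K}$ on the minimal set is equicontinuous is false in general, and the paper proves the opposite (its Lemma~\ref{IIbeq}): when the sets $I_{w}=\bigcap_{n}I_{w_{n}}$ are nondegenerate intervals, one can find $u,v\in K$ arbitrarily close with $|f^{n}u-f^{n}v|\geq\epsilon_{0}$ for infinitely many $n$. The adding machine on $\Sigma$ is equicontinuous, but the coding map to $K\subset I$ does not transport equicontinuity, precisely because --- as you yourself note later --- the $2^{k}$ components of level $k$ need not all shrink. The correct statement, and the main technical content of the paper, is that $\mathcal{K}$ is still \emph{mean}-L-stable: since the $I_{w}$ are pairwise disjoint subintervals of $I$, at most $|I|/\epsilon+1$ of them can have length $\geq\epsilon$, and the adding machine visits the corresponding cylinders with frequency controlled by $\mu$, so the exceptional times form a set of small upper density. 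Your density bookkeeping in the MMA half is essentially this same argument, so the repair is to run it for MLS as well rather than invoking equicontinuity. None of this affects the validity of the corollary's deduction from the two theorems.
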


Corollary~\ref{id} and Davenport's theorem~\cite{Da} provides a completely new proof of Sarnak's conjecture for 
all flows generated by interval maps with zero topological entropy (see~\cite{K} for a different proof).  

\medskip
\begin{corollary}[Interval Map and M\"obius Function]~\label{im}
The M\"obius function is linearly disjoint from all flows ${\mathcal X}$ generated by interval maps $f: I\to I$ with zero topological entropy
\end{corollary}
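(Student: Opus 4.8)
The plan is to reduce the theorem to the classical structure theory of interval maps with zero topological entropy (Misiurewicz, Sharkovskii, Block--Coppel). Recall that $h_{\mathrm{top}}(f)=0$ forces every periodic orbit to have period a power of $2$, and that there are two cases. Either only finitely many periods $1,2,\dots,2^{N}$ occur, in which case (by the Sharkovskii-type result that an infinite $\omega$-limit set forces periodic points of every period $2^{n}$) every $\omega$-limit set is a single periodic orbit; or $f$ is of type $2^{\infty}$, and then for each $n$ there is a cycle of $2^{n}$ compact intervals $J^{(n)}_{0},\dots,J^{(n)}_{2^{n}-1}$ with pairwise disjoint interiors, permuted cyclically by $f$ (so $f(J^{(n)}_{i})\subseteq J^{(n)}_{i+1}$, indices mod $2^{n}$), with each $J^{(n)}_{i}$ containing two of the level-$(n+1)$ intervals; writing $P_{n}=\bigcup_{i}J^{(n)}_{i}$ and $Q=\bigcap_{n}P_{n}$, the set $Q$ is a Cantor set, $f|_{Q}$ factors onto the $2$-adic odometer, and every infinite $\omega$-limit set is a ``solenoidal'' subset of some such $Q$. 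In particular every minimal $K\subseteq I$ is either a periodic orbit or an infinite minimal subset of some $Q$.

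For MMLS, a finite minimal set is trivially equicontinuous, hence MLS. Let $K\subseteq Q$ be an infinite, hence Cantor, minimal set. The $2^{n}$ sets $J^{(n)}_{i}\cap K$ form a clopen partition of $K$ which $f$ permutes cyclically. Given $\epsilon>0$, disjointness of the interiors inside $[a,b]$ gives $\sum_{i}\operatorname{diam}J^{(n)}_{i}\le b-a$, so at most $(b-a)/\epsilon$ of the $J^{(n)}_{i}$ have diameter $\ge\epsilon$; pick $n$ with $2^{-n}(b-a)/\epsilon<\epsilon$ and let $\delta$ be the minimal gap between distinct pieces of the partition $\{J^{(n)}_{i}\cap K\}$. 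If $x,y\in K$ and $d(x,y)<\delta$ then $x,y$ lie in a common $J^{(n)}_{i}\cap K$, hence $f^{k}x,f^{k}y\in J^{(n)}_{i+k}\cap K$ for all $k$, so $d(f^{k}x,f^{k}y)<\epsilon$ except when $i+k\bmod 2^{n}$ indexes one of the (few) large intervals, an exceptional set of $k$ that is periodic of density $<\epsilon$. Thus $\mathcal K$ is MLS.

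For MMA, fix $x\in I$. If $\omega(x)$ is a periodic orbit $O$ of period $p$, then $f^{k}x\to O$; choosing $\eta$ small enough that the $\eta$-balls about the points of $O$ are disjoint, a tracking argument (if $f^{k}x$ is within $\eta$ of $y_{j}$ then, for $k$ large, $f^{k+1}x$ is within $\eta$ of $y_{j+1}$ and of no other point of $O$) produces a single $z\in O$ with $d(f^{k}x,f^{k}z)<\eta$ for all large $k$, whence $\limsup_{N}\tfrac1N\sum_{k\le N}d(f^{k}x,f^{k}z)\le\eta$ and $x\in\operatorname{Basin}(O)$. If $\omega(x)$ is infinite it is a solenoidal subset of some $Q$; let $K\subseteq\omega(x)$ be minimal (so $K$ meets every $J^{(n)}_{i}$). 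The structural input is that the orbit of $x$ is eventually trapped in $P_{n}$ and then cycles through its components, i.e. there are $N_{n}$ and $i_{n}$ with $f^{N_{n}+j}x\in J^{(n)}_{(i_{n}+j)\bmod 2^{n}}$ for all $j\ge0$. Pick $z\in K\cap J^{(n)}_{i_{n}}$; then $f^{N_{n}+j}x$ and $f^{N_{n}+j}z$ lie in the same level-$n$ interval, so $d(f^{N_{n}+j}x,f^{N_{n}+j}z)\le\operatorname{diam}J^{(n)}_{(i_{n}+j)\bmod 2^{n}}$. Since these indices equidistribute mod $2^{n}$, $\limsup_{N}\tfrac1N\sum_{k\le N}d(f^{k}x,f^{k}z)\le 2^{-n}\sum_{i}\operatorname{diam}J^{(n)}_{i}\le(b-a)2^{-n}$, which is below any prescribed $\epsilon$ once $n$ is large. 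Hence $x\in\operatorname{Basin}(K)$, and $I=\bigcup_{K}\operatorname{Basin}(K)$.

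The main obstacle is the structural input used in the infinite-$\omega$-limit case: that an infinite $\omega$-limit set of a zero-entropy interval map is genuinely solenoidal (no $\omega$-limit set contains an interval) and, more delicately, that the orbit of such an $x$ is eventually trapped in $P_{n}$ rather than merely accumulating on it. This forces one to choose the periodic interval cycles with the correct maximality/trapping properties and is the technical heart of the argument; once it is in place the two Ces\`aro estimates are routine. A point worth emphasizing is that a solenoidal minimal set need not be equicontinuous (Denjoy-type behaviour), which is precisely why MLS there must be obtained through the ``few large pieces'' estimate rather than from equicontinuity --- this is also the source of the non-equicontinuous MMA/MMLS examples promised in the introduction.
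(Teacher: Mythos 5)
Your proposal follows essentially the paper's own route: the corollary is deduced by proving that every zero‑entropy interval map generates an MMA and MMLS flow (the Main Theorem) and your argument for that uses the same structural input (all periods are powers of $2$; infinite minimal sets live in nested cycles of $2^{n}$ intervals factoring onto the dyadic odometer) and the same key estimate (pairwise disjointness forces all but at most $|I|/\epsilon$ of the level‑$n$ intervals to have length $<\epsilon$, so the ``bad'' times have small density). Two presentational differences: you work at a fixed finite level, where the index dynamics is an exact cyclic rotation and the density count is immediate, whereas the paper passes to the inverse limit $\Sigma$ and invokes the Birkhoff ergodic theorem for the adding machine; and you treat the degenerate (odometer‑conjugate) and nondegenerate (Denjoy‑like) components uniformly, whereas the paper separates them into Cases II(a) and II(b). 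Your version is, if anything, more elementary. Two points to repair. First, in the mean‑attraction step you must choose $z\in K$ with $f^{N_{n}}z\in K\cap J^{(n)}_{i_{n}}$ (possible since $f|K$ is onto $K$ by minimality), not $z\in K\cap J^{(n)}_{i_{n}}$ itself: otherwise $f^{N_{n}+j}x$ and $f^{N_{n}+j}z$ lie in level‑$n$ intervals whose indices differ by $N_{n}$ modulo $2^{n}$ and need not be close; this phase adjustment is exactly what the paper does in Lemma~\ref{IIbma}. Second, your proposal stops at MMA/MMLS; to reach the stated corollary about the M\"obius function one must still quote that the M\"obius sequence is an oscillating sequence (Davenport~\cite{Da}) and that oscillating sequences are linearly disjoint from MMA and MMLS flows (Theorem~\ref{fjthm} of~\cite{FJ}). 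With those two additions the argument matches the paper's.
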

 
Suppose $\mathcal{X}$ is a flow generated by $f$ and suppose $\mu$ is a regular Borel probability measure on $X$. 
Let $L^2(\mu)$ be the space of all $L^2$ functions. We say $\mu$ is $f$-invariant if 
$$
\mu (f^{-1}(A)) =\mu (A) \hbox{ for all Borel sets $A\subset X$}.
$$
Suppose $\mu$ is an $f$-invariant regular Borel probability measure. A complex number $\lambda$ is called an eigenvalue 
if there is a function $\phi \in L^2(\mu)$ such that $\phi( fx)=\lambda \phi (x)$ for $\mu$-a.e.$x\in X$. Here $\phi$ is called an eigenfunction with respect to $\lambda$.  
We say $(\mathcal{X}, \mu)$ has discrete spectrum or pure point spectrum if there exists an orthonormal $L^{2}(\mu)$-basis consisting of eigenfunctions. Theorem~\ref{mainthm} combining with~\cite[Theorem 3.8]{LTY} provides a non-trivial example of a MLS flow having discrete spectrum. 

\medskip
\begin{corollary}[Discrete Spectrum]~\label{dsp}
Suppose $\mathcal{X}$ is the flow generated by an interval map $f$. Suppose $K\subseteq X$ is a minimal subset of $f$. Then every ergodic invariant measure $\mu$ on $\mathcal{K}=(K, f)$ has discrete spectrum. 
\end{corollary}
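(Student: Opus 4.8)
The plan is to deduce Corollary~\ref{dsp} as a direct combination of the Main Theorem (Theorem~\ref{mainthm}) and the quoted result \cite[Theorem 3.8]{LTY}. The strategy is as follows. First I would recall, following \cite{LTY}, that for a \emph{minimal} MLS flow every ergodic invariant measure has discrete spectrum; this is the content of \cite[Theorem 3.8]{LTY}, and indeed it is the reason mean-L-stability is the right hypothesis here --- mean-L-stability is the measure-theoretic relaxation of equicontinuity that still forces the Koopman operator to act with pure point spectrum on any minimal factor. So the proof reduces to checking that the sub-flow $\mathcal{K}=(K,f)$ attached to a minimal set $K\subseteq X$ is MLS.

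The second step is exactly where Theorem~\ref{mainthm} enters. Since $f\colon I\to I$ has zero topological entropy, Theorem~\ref{mainthm} tells us that the flow $\mathcal{X}$ generated by $f$ is MMA and MMLS. By the definition of MMLS (Definition~\ref{mmls}), this means that for \emph{every} minimal subset $K\subseteq X$ the sub-flow $\mathcal{K}$ is MLS. In particular the given minimal set $K$ yields a MLS flow $\mathcal{K}$. Now apply \cite[Theorem 3.8]{LTY} to the minimal MLS flow $\mathcal{K}$: any ergodic invariant Borel probability measure $\mu$ on $K$ has discrete spectrum, i.e. $L^2(\mu)$ admits an orthonormal basis of eigenfunctions for the Koopman operator $\phi\mapsto \phi\circ f$. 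This is precisely the assertion of Corollary~\ref{dsp}.

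A small amount of bookkeeping is worth spelling out. One should note that $K$ is compact (being closed inside the compact space $X=I$) and $f|_K\colon K\to K$ is a continuous surjection, so $\mathcal{K}$ is a genuine topological dynamical system to which the ergodic-theoretic machinery of \cite{LTY} applies; and one should note that the existence of at least one ergodic invariant measure is guaranteed by the Krylov--Bogolyubov theorem together with the ergodic decomposition, although the statement of the corollary is vacuously fine even if one only quantifies over such measures. No genuine obstacle arises at this stage: the entire content has been front-loaded into Theorem~\ref{mainthm} (proving the MMLS property for zero-entropy interval maps) and into the cited spectral theorem of \cite{LTY}.

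If anything, the ``hard part'' is purely one of attribution: one must make sure the hypotheses of \cite[Theorem 3.8]{LTY} match what we have, namely that it is stated (or can be restated) for minimal mean-L-stable systems and delivers discrete spectrum for every ergodic invariant measure, rather than, say, only for the unique invariant measure in the uniquely ergodic case. Granting that matching, the proof of Corollary~\ref{dsp} is a two-line deduction: \textbf{(i)} $f$ has zero entropy $\Rightarrow$ $\mathcal{X}$ is MMLS (Theorem~\ref{mainthm}) $\Rightarrow$ $\mathcal{K}$ is MLS and minimal; \textbf{(ii)} a minimal MLS flow has the property that each ergodic invariant measure has discrete spectrum (\cite[Theorem 3.8]{LTY}). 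Combining \textbf{(i)} and \textbf{(ii)} gives the corollary. $\qed$
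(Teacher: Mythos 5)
Your proposal is correct and is exactly the argument the paper intends: the paper itself gives no separate proof of Corollary~\ref{dsp}, stating only that it follows by combining Theorem~\ref{mainthm} (which makes $\mathcal{K}$ a minimal MLS flow) with \cite[Theorem 3.8]{LTY}. Your only addition is the (correct) observation that the zero-entropy hypothesis on $f$, implicit in the corollary's statement, must be invoked to apply Theorem~\ref{mainthm}.
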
 

In additional, Corollary~\ref{dsp} combining with the work in~\cite{MRT,HWZ} gives another proof of Sarnak's conjecture for
all flows generated by interval maps with zero topological entropy as present in Corollary~\ref{im}.   
  
I would like to point out that there are flows generated by higher-dimensional maps with zero topological entropy which are not MLS when they are restricted on their minimal subsets. 
Thus in the higher-dimensional case, the oscillation property alone is not enough. We need instead the higher-order oscillation property. 
The reader who is interested in the higher-order oscillation property for a sequence ${\bf c}$ 
and the role it plays in Sarnak's conjecture for higher-dimensional dynamical systems is encouraged to read my paper~\cite{J}.   

We will give a proof of  Theorem~\ref{mainthm} in the next section.

\medskip
\medskip
\noindent {\em Acknowledgement.}  This paper was completed during my visit to the National Center for Theoretical Sciences (NCTS) 
at National Taiwan University.  I would like to thank NCTS for its hospitality and Professor Jung-Chao Ban for helpful discussions. I would like to thank John Adamski for his help on revising of the paper. 

\section{\bf Proof of Theorem~\ref{mainthm}} 
 
Suppose $I=[a, b]\subset {\mathbb R}$ is a closed interval and $f: I\to I$ is an interval map. 
Suppose the topological entropy $ent(f) =0$. In other words, this means that $f$ cannot have 
any horseshoe-type invariant subset in $I$. More precisely, this means that $f$ can not have 
a forward $f$-invariant closed subset $X\subset I$ 
such that $f: X\to X$ is topologically conjugate (or semi-conjugate) to a sub-shift of finite type 
$\sigma_{A}:\Sigma_{A}\to \Sigma_{A}$ with the maximal Lyapunov exponent greater than $0$.
For the sub-shift of finite type $\sigma_{A}:\Sigma_{A}\to \Sigma_{A}$, we mean that $A=(a_{ij})_{d\times d}$ is a $d\times d$-square matrix of $0$'s and $1$'s, 
$$
\Sigma_{A}= \{ w=i_{0}i_{1}\cdots i_{n-1}i_{n}\cdots \;|\; i_{n-1}\in \{1, 2, \cdots, d\},\; a_{i_{n-1}i_{n}}=1, \; n\in \mathbb{N} \}
$$
and 
$$
\sigma_{A}(w) =i_{1}\cdots i_{n-1}i_{n}\cdots.
$$
By abuse of notation, we use $\Sigma_{A}$ to mean a horseshoe-type forward $f$-invariant subset $X\subset I$.  
Since $A$ is a nonnegative square matrix, from the Perron-Frobenius theorem, it has a maximal nonnegative eigenvalue $e_{A}$. 
If $A$ is also irreducible, $e_{A}$ is simple, unique, and positive.
The statement that maximal Lyapunov exponent of $\sigma_{A}$ is greater than $0$ is equivalent to saying that $e_{A}>1$.  

A point $p\in X$ is called a periodic point of period $n\geq 1$ if $f^{n} (p)=p$. The minimal such $n$ is called the primitive period. 
Following the proof of Sharkovskii's theorem (for example, the reader may refer to book~\cite{D} or book~\cite[Theorem 4.55 in \S4.5]{R}), 
we see that if $f$ has a periodic point whose period is different from all powers of $2$, 
then $f$ has a horseshoe-type invariant subset. For example, if $f$ has a periodic point of period $3$, 
then $f$ has a horseshoe-type invariant subset $\sigma_{A}: \Sigma_{A}\to \Sigma_{A}$ for 
$$
A=\left( \begin{matrix}
                0&1\\
                1&1
     \end{matrix}\right)
     \hbox{ or }
     \left( \begin{matrix}
                1&1\\
                1&0
     \end{matrix}\right)  
                 $$  
with the maximal eigenvalue $e_{A}=(1+\sqrt{5})/2$. 
Thus we have that 

\medskip
\begin{lemma}~\label{finite}
Suppose $f$ is an interval map with zero topological entropy. Then the period of any periodic point of $f$ is $2^{n}$ for some $n\geq 0$.
\end{lemma}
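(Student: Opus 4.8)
The plan is to argue by contradiction. Assume $f$ has a periodic point $p$ whose primitive period $k$ is not a power of $2$, and write $k=2^{s}q$ with $s\geq 0$ and $q\geq 3$ odd. The goal is to produce, for some iterate of $f$, a horseshoe-type invariant subset whose transition matrix has maximal eigenvalue $>1$, which contradicts the hypothesis $ent(f)=0$ (recall that by the discussion preceding the lemma, $ent(f)=0$ precisely forbids such a subset).

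First I would invoke Sharkovskii's theorem in the form recalled just before the lemma (see \cite{D} or \cite[\S4.5]{R}). Since in the Sharkovskii ordering the block $2^{s}\cdot(\text{odd}>1)$ precedes the block $2^{s+1}\cdot(\text{odd}>1)$, the existence of a periodic point of period $k=2^{s}q$ forces a periodic point $z$ of period $2^{s+1}\cdot 3$. Then $z$ is a periodic point of primitive period $3$ for the iterate $F:=f^{2^{s+1}}$, because $(2^{s+1}\cdot 3)/\gcd(2^{s+1}\cdot 3,\,2^{s+1})=3$. Next I would apply to $F$ exactly the construction quoted in the text for a period-$3$ orbit: a connect-the-dots ($P$-linear) model of the orbit of $z$ yields two closed subintervals $J_{1},J_{2}\subset I$ with disjoint interiors such that $F(J_{1})\supseteq J_{1}\cup J_{2}$ and $F(J_{2})\supseteq J_{1}$, and hence a closed $F$-invariant set on which $F$ is semi-conjugate to the subshift of finite type $\sigma_{A}$ with
$$
A=\begin{pmatrix} 1 & 1\\ 1 & 0\end{pmatrix}\quad\text{or}\quad A=\begin{pmatrix} 0 & 1\\ 1 & 1\end{pmatrix},
$$
so that $e_{A}=(1+\sqrt{5})/2>1$ in either case. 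Finally, since topological entropy does not decrease under semi-conjugacy, $h(\sigma_{A})=\log e_{A}$, and $h(F)=h(f^{2^{s+1}})=2^{s+1}h(f)$ by multiplicativity under iteration, I get $h(f)\geq 2^{-(s+1)}\log e_{A}>0$, contradicting $ent(f)=0$. Therefore every periodic point of $f$ has period $2^{n}$ for some $n\geq 0$.

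The only non-routine ingredient is the passage from a periodic orbit whose period is not a power of two to the two covering relations $F(J_{1})\supseteq J_{1}\cup J_{2}$, $F(J_{2})\supseteq J_{1}$; this is the combinatorial core of the proof of Sharkovskii's theorem, and since it is entirely classical I would cite it (as the text does) rather than reproduce it. Everything else — the arithmetic with the Sharkovskii order, the period-bookkeeping for iterates, and the entropy estimate — is routine.
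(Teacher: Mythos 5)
Your proof is correct and follows essentially the same route as the paper, which simply cites the Sharkovskii-type fact that a periodic point whose period is not a power of $2$ forces a horseshoe-type invariant subset (with the period-$3$ case as the model example). Your only addition is to make that citation concrete by using the Sharkovskii ordering to force a point of period $2^{s+1}\cdot 3$, passing to the iterate $F=f^{2^{s+1}}$ to reduce to the period-$3$ horseshoe, and finishing with $h(f^{m})=m\,h(f)$ — all of which is sound and consistent with the paper's intent.
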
 

For any $x\in I$, let
$$
\omega (x) = \cap_{n=0}^{\infty} \overline{\{ f^{k}x \;|\; k\geq n\}}.
$$
be the $\omega$-limit set of $x$ under $f$.  
Let $K=\omega (x)$. Consider the sub-flow 
\begin{equation}~\label{subflow}
\mathcal{K} =\{ (f|K)^{n}: K \to K\}_{n\in \{0\}\cup \mathbb{N}}.
\end{equation}
{\em It is a minimal flow. }

We have two cases: 
\begin{itemize}
\item[(I)]  the set $K$ is finite and 
\item[(II)] the set $K$ is infinite. 
\end{itemize}

Case (I) is easier. In this case, 
$$
K =\{ p_{0}, p_{1}, \cdots, p_{2^{n}-1}\}
$$
is a periodic orbit of period $2^{n}$ for some $n\geq 0$. That is, $f(p_{i}) =p_{i+1 \pmod{2^{n}}}$ for $i=0, 1, \cdots, 2^{n}-1$. 
Therefore, the following lemma is easily verified.  

\medskip
\begin{lemma}~\label{I}
In Case (I), the sub-flow (\ref{subflow}) is equicontinuous, thus, MLS. 
Moreover, $x$ is mean-attractable to $K$.
\end{lemma}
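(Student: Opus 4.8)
The plan is to handle the two assertions of Lemma~\ref{I} separately. Equicontinuity of $\mathcal{K}$ is purely formal because $K$ is finite: I would set $\rho=\min\{d(p_{i},p_{j}):0\le i<j\le 2^{n}-1\}>0$ (the case $2^{n}=1$ being trivial), and observe that for any $\epsilon>0$ the choice $\delta=\rho$ works, since $x,y\in K$ with $d(x,y)<\delta$ forces $x=y$ and hence $d(f^{m}x,f^{m}y)=0$ for every $m$. Since an equicontinuous flow is MLS (taking the exceptional set to be empty), $\mathcal{K}$ is MLS, as already remarked in the text.

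For the mean-attraction statement the first step is to upgrade ``$K=\omega(x)$'' to genuine convergence: I would show $d(f^{k}x,K)\to 0$ as $k\to\infty$, which follows from compactness of $I$, since any subsequence of the orbit of $x$ staying a fixed positive distance from $K$ would have a limit point outside $K=\omega(x)$, a contradiction. The heart of the argument is a phase-locking observation. Given $\epsilon>0$, fix $\delta'\in(0,\epsilon)$ with $2\delta'<\rho$; by continuity of $f$ at the finitely many points $p_{j}$ there is $\eta\in(0,\delta']$ with $d(y,p_{j})<\eta\Rightarrow d(f(y),p_{j+1})<\delta'$ for all $j$ (indices mod $2^{n}$), and then I choose $k_{0}$ with $d(f^{k}x,K)<\eta$ for all $k\ge k_{0}$. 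Because the $p_{j}$ are $\rho$-separated and $\eta<\rho/2$, for each $k\ge k_{0}$ there is a unique index $\phi(k)$ with $d(f^{k}x,p_{\phi(k)})<\eta$; the displayed implication then gives $d(f^{k+1}x,p_{\phi(k)+1})<\delta'$, and combined with $d(f^{k+1}x,p_{\phi(k+1)})<\eta\le\delta'$ and $2\delta'<\rho$ this forces $\phi(k+1)\equiv\phi(k)+1$. Hence $\phi(k)$ advances cyclically, so taking $z=p_{\ell}\in K$ with $\ell\equiv\phi(k_{0})-k_{0}\pmod{2^{n}}$ yields $f^{k}z=p_{\phi(k)}$ and therefore $d(f^{k}x,f^{k}z)<\eta$ for all $k\ge k_{0}$; in fact $d(f^{k}x,f^{k}z)\to 0$.

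Finally I would split the Cesàro sum at $k_{0}$, bounding the first $k_{0}$ terms by the diameter of $I$, to obtain
$$
\limsup_{N\to\infty}\frac{1}{N}\sum_{n=1}^{N}d(f^{n}x,f^{n}z)\le\eta\le\delta'<\epsilon,
$$
so $x$ is mean-attracted to $K$ (indeed this single $z$ gives $\limsup=0$). No serious obstacle arises; the only step requiring genuine care is the phase-locking argument, i.e.\ checking that the orbit of $x$ eventually tracks one specific periodic trajectory in $K$ rather than merely accumulating on $K$, and the constants $\rho$ and $\eta$ are precisely what make that work. Note that the value $2^{n}$ of the period plays no role here — the same argument applies to any periodic $\omega$-limit set.
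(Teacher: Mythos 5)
Your proof is correct. The paper offers no argument for this lemma (it is stated as ``easily verified''), so there is nothing to compare against; your write-up simply supplies the omitted details. The one genuinely non-routine point is exactly the one you isolate: $\omega(x)=K$ being a periodic orbit does not by itself say the orbit of $x$ tracks a single periodic trajectory, and your phase-locking argument (uniqueness of the nearest $p_{\phi(k)}$ once the orbit is within $\eta<\rho/2$ of $K$, plus continuity forcing $\phi(k+1)\equiv\phi(k)+1$) closes that gap correctly; the rest (equicontinuity on a finite set, splitting the Ces\`aro sum at $k_{0}$) is routine and handled properly.
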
 

Case (II) is more complicated. Since the topological entropy of $f$ is zero,  in this case, $K$ contains 
no periodic point (see, for example,~\cite[Proposition 5.21]{R}).
Since $f$ cannot have any horseshoe-type forward $f$-invariant subset in $I$, 
we can find a sequence of sets 
$$
\eta_{n}=\{ I_{n,k}\;|\; 0\leq k\leq 2^{n}-1\},\;\; n\in\mathbb{N},
$$
of closed intervals $I_{n,k}$ such that 
\begin{itemize}
\item[1)] for each $n\geq 1$, intervals in $\eta_{n}$ are pairwise disjoint;
\item[2)] $f(I_{n, k}) =I_{n, k+1 \pmod{2^{n}}}$ for any $n\geq 1$ and $0\leq k\leq 2^{n}-1$;
\item[3)] for each $n\geq 1$ and $I_{n, k}\in \eta_{n}$, two and only two intervals $I_{n+1, k}$ and $I_{n+1, k+2^{n}}$ in $\eta_{n+1}$ are sub-intervals of $I_{n, k}$, that is,
$$
I_{n+1, k}\cup I_{n+1, k+2^{n}}\subseteq I_{n, k};
$$
\item[4)] $K \subseteq \cap_{n=1}^{\infty} \cup_{k=0}^{2^{n}-1} I_{n,k}$.
\end{itemize}
This is a well-known fact about interval maps with zero topological entropy among all experts 
in this field (for example, refer to~\cite[Proposition 5.22]{R}).  
 
We put a symbolic coding on each interval $I_{n,k}$ in $\eta_{n}$ for $n\geq 1$ and $0\leq k\leq 2^{n}-1$.
Consider the binary expansion of $k$ as
$$
k= i_{0} + i_{1} 2+ \cdots i_{m}2^{m} +\cdots +i_{n-1}2^{n-1}
$$
where $i_{0}, i_{1}, \cdots, i_{m}, \cdots, i_{n-1} =0$ or $1$. 
Let us only remember the coding $w_{n}=i_{0}i_{1}\cdots i_{m}\cdots i_{n-1}$ and label $I_{n, k}$ by $w_{n}$, that is, $I_{n, k}=I_{w_{n}}$. 

Define the space of codings
$$
\Sigma_{n} =\{ w_{n} =i_{0}i_{1}\cdots i_{m}\cdots i_{n-1}\;|\; i_{m}\in \{0,1\},\; m=0, 1, \cdots, n-1\}
$$
with the product topology.  
We have a natural shift map 
$$
\sigma_{n}(w_{n}) =i_{1}\cdots i_{m}\cdots i_{n-1}: \Sigma_{n}\to \Sigma_{n-1}.
$$
It is a continuous $2$-to-$1$ map. Then we have an inverse limit system 
$$
\{ (\sigma_{n}: \Sigma_{n}\to \Sigma_{n-1})\;; \; n\in \mathbb{N}\}.
$$
Let 
$$
(\sigma: \Sigma \to \Sigma) =\lim_{\longleftarrow} \;(\sigma_{n}: \Sigma_{n}\to \Sigma_{n-1}).
$$
Explicitly, we have that
$$
\Sigma=\{ w =i_{0}i_{1}\cdots i_{n-1}\cdots \;|\; i_{n-1}\in \{0,1\},\; n\in \mathbb{N} \}
$$
and
$$
\sigma: w=i_{0}i_{1}\cdots i_{n-1}\cdots \to \sigma(w) =i_{1}\cdots i_{n-1}\cdots. 
$$

Consider the metric $d(\cdot, \cdot)$ on $\Sigma$ defined as 
$$
d (w, w') =\sum_{n=1}^{\infty} \frac{|i_{n-1}-i_{n-1}'|}{2^{n}}
$$
for $w=i_{0}i_{1}\cdots i_{n-1}\cdots$ and $w'=i_{0}'i_{1}'\cdots i_{n-1}'\cdots$. 
It induces the same topology as the topology from the inverse limit and makes $\Sigma$ a compact metric space. 

On $\Sigma$, we have a map called the {\em adding machine} denoted as $add$ and defined as follows: for any $w=i_{0}i_{1}\cdots \in \Sigma$, consider the formal binary expansion 
$a=\sum_{n=1}^{\infty} i_{n-1}2^{n-1}$, then $a+1= \sum_{n=1}^{\infty} i_{n-1}'2^{n-1}$ has a unique formal binary expansion. 
Here ``a+1'' is just a notation. It means if $i_{0}=0$, then $i_{0}'=1$ and all other $i_{n}'=i_{n}$ and if $i_{0}=1$, then $i_{0}'=0$ and then consider $i_{1}+1$, and so on.
The adding machine is 
$$
add (w) =i_{0}'i_{1}'\cdots i_{n-1}'\cdots: \Sigma\to \Sigma.
$$
It is a homeomorphism of $\Sigma$ to itself. Moreover, we have

\medskip
\begin{lemma}~\label{sca}
The flow 
$$
\mathcal{A}=\{ add^{n}: \Sigma\to \Sigma\}_{n\in \{0\}\cup \mathbb{N}}
$$ 
is minimal and equicontinuous.
\end{lemma}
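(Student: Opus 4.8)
The plan is to use the defining metric: two points of $\Sigma$ are $d$-close exactly when they share a long common initial segment, and the key structural fact is that the action of $add$ on an initial segment of length $n$ depends only on that segment. First I would record this combinatorial observation. For each $n\geq 1$, identify an initial block $i_{0}i_{1}\cdots i_{n-1}$ with the integer $k=i_{0}+i_{1}2+\cdots+i_{n-1}2^{n-1}\in\{0,1,\dots,2^{n}-1\}$. When we form $add(w)$ we add $1$ starting from $i_{0}$, and the resulting carry propagates only from lower-index digits toward higher-index digits; consequently the first $n$ digits of $add(w)$ are completely determined by the first $n$ digits of $w$, and under the above identification $add$ acts on length-$n$ blocks as the cyclic permutation $k\mapsto k+1\pmod{2^{n}}$. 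By induction on $m$, the first $n$ digits of $add^{m}(w)$ are determined by the first $n$ digits of $w$ for every $m\geq 0$, and as $m$ ranges over $\{0,1,\dots,2^{n}-1\}$ these length-$n$ blocks run through all $2^{n}$ elements of $\Sigma_{n}$ exactly once.

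For equicontinuity, given $\epsilon>0$ I would pick $n$ with $2^{-n}<\epsilon$ and set $\delta=2^{-n}$. If $d(w,w')<\delta$ then $w$ and $w'$ must agree in the digits $i_{0},\dots,i_{n-1}$, since a discrepancy at digit $i_{j-1}$ with $j\leq n$ would force $d(w,w')\geq 2^{-j}\geq 2^{-n}$. By the observation above, $add^{m}(w)$ and $add^{m}(w')$ then agree in their first $n$ digits for every $m\geq 1$, so $d(add^{m}(w),add^{m}(w'))\leq\sum_{m'>n}2^{-m'}=2^{-n}<\epsilon$ for all $m$. This is exactly the equicontinuity condition.

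For minimality, fix $w\in\Sigma$; I must show its forward orbit is dense. Given $w'\in\Sigma$ and $\epsilon>0$, choose $n$ with $2^{-n}<\epsilon$. Since the length-$n$ initial blocks of $w,add(w),\dots,add^{2^{n}-1}(w)$ exhaust $\Sigma_{n}$ and $add^{2^{n}}(w)$ has the same length-$n$ block as $w$, there is an $m\in\{1,\dots,2^{n}\}$ for which the first $n$ digits of $add^{m}(w)$ coincide with those of $w'$; hence $d(add^{m}(w),w')\leq 2^{-n}<\epsilon$. Thus every forward orbit is dense, so $\mathcal{A}$ is minimal (and one may note that every point being a minimal point also gives minimality directly).

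The only step requiring genuine care is the carry-propagation claim, namely that the behaviour of $add$ on the first $n$ coordinates factors through $\Sigma_{n}$; once that is isolated, both minimality and equicontinuity are routine metric bookkeeping. I do not expect a serious obstacle here — this is the classical dyadic odometer — so the write-up can be kept short, essentially just making the carry argument precise and tracking the constant $2^{-n}$ through the metric.
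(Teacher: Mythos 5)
Your proof is correct and complete. The paper itself gives no argument for this lemma (it is explicitly left to the reader), and what you have written is precisely the standard, expected proof: the carry in $add$ propagates only upward, so the first $n$ digits of $add^{m}(w)$ depend only on the first $n$ digits of $w$ and realize the cyclic shift $k\mapsto k+m \pmod{2^{n}}$ on $\Sigma_{n}$, from which equicontinuity and density of every forward orbit (hence minimality, in the paper's sense that $\overline{\{add^{n}w\}}=\Sigma$ for every $w$) follow by the metric estimate $d(w,w')\leq 2^{-n}$ whenever the first $n$ digits agree. No gaps.
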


We leave the proof for the reader since it is not difficult.  
 
For a point $w=i_{0}i_{1}\cdots i_{n-1}\cdots \in \Sigma$, let $w_{n}=i_{0}i_{1}\cdots i_{n-1}$. 
We use 
$$
[w]_{n}=\{ w'\in \Sigma \;|\; w_{n}'=w_{n}\}
$$
to denote the $n$-cylinder containing $w$. 

One can check that for every $w\in \Sigma$, 
$$
\cdots \subset I_{w_{n}} \subset I_{w_{n-1}}\subset \cdots \subset I_{w_{2}}\subset I_{w_{1}},
$$
is a nested sequence of closed intervals. Therefore
$$
I_{w} =\cap_{n=1}^{\infty} I_{w_{n}}
$$
is a non-empty, connected, and compact subset of $I$. Moreover, we have that $f(I_{w}) =I_{add(w)}$ and
$$
K \subset \cup_{w\in \Sigma} I_{w}.
$$ 

The set $\mathcal{C}=\{ [w]_{n} \;| \; w\in \Sigma, n\in \mathbb{N}\}$ of all $n$-cylinders forms a topological basis for $\Sigma$. Let $\mathcal{B}$ be the $\sigma$-algebra generated $\mathcal{C}$. We have a standard probability measure $\mu$ defined on $\mathcal{B}$ such that $\mu ([w]_{n}) =1/2^{n}$ for all $w\in \Sigma$. We say $\mu$ is $add$-invariant if $\mu (add^{-1}(A))=\mu(A)$ for all $A\in\mathcal{B}$. We say $\mu$ is ergodic if $add^{-1}(A))=A$ for $A\in \mathcal{B}$ implies either $\mu (A) =0$ or $1$. The following lemma is also well-known among experts in this field and not difficult to be verified. We also leave it to the reader. 

\medskip
\begin{lemma}~\label{erg}
The measure $\mu$ is $add$-invariant and ergodic.
\end{lemma}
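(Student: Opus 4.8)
The plan is to verify two standard facts about the dyadic odometer $(\Sigma, add, \mu)$: that $\mu$ is $add$-invariant, and that it is ergodic. For invariance, I would argue that it suffices to check $\mu(add^{-1}(A)) = \mu(A)$ on the generating semi-algebra of cylinders, since the collection of $A \in \mathcal{B}$ satisfying this identity is a monotone class (indeed a Dynkin system) containing $\mathcal{C}$, hence equals $\mathcal{B}$ by the monotone class theorem. So I would fix an $n$-cylinder $[w]_n$ and compute $add^{-1}([w]_n)$. Because $add$ is a homeomorphism, $add^{-1}([w]_n) = add^{-1}[w]_n$ is a Borel set; the key combinatorial point is that the map $w \mapsto add(w)$, when restricted to the first $n$ coordinates, induces a \emph{bijection} of $\Sigma_n$ onto itself — it is exactly the ``add $1$ mod $2^n$'' permutation of $\{0,1,\dots,2^n-1\}$ under the binary-to-integer identification. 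Consequently $add^{-1}([w]_n)$ is again an $n$-cylinder $[w']_n$ (the one indexed by the integer $k-1 \bmod 2^n$), and therefore $\mu(add^{-1}([w]_n)) = 1/2^n = \mu([w]_n)$.

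For ergodicity I would use the standard characterization via $L^2$ invariant functions, or equivalently argue directly with the cylinder algebra. Suppose $A \in \mathcal{B}$ with $add^{-1}(A) = A$. Given $\varepsilon > 0$, approximate $A$ in $L^1(\mu)$ by a finite union $B$ of $n$-cylinders so that $\mu(A \triangle B) < \varepsilon$. Now observe that the orbit of any fixed $n$-cylinder under $add$ cycles through \emph{all} $2^n$ cylinders of level $n$ (again because $add$ acts as the full cyclic permutation of order $2^n$ on $\Sigma_n$), so $add^{-j}(B)$ for suitable $j$ is a union of $n$-cylinders that is ``spread evenly,'' and iterating/averaging forces $\mu(A)$ to be close to $\mu(A)^2$. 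Concretely: since $A$ is invariant, $\mu(A \triangle add^{-j}B) = \mu(add^{-j}A \triangle add^{-j}B) = \mu(A \triangle B) < \varepsilon$ for every $j$; choosing $j$ so that $add^{-j}B$ has small measure-overlap structure with $B$ relative to their densities, one gets $\mu(A) = \mu(A \cap A) $ approximated by $\mu(B \cap add^{-j}B)$, and for the odometer the cylinder structure makes $\mu(B \cap add^{-j}B) \to \mu(B)^2$ along an appropriate subsequence of $j$. Letting $\varepsilon \to 0$ yields $\mu(A) = \mu(A)^2$, hence $\mu(A) \in \{0,1\}$.

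The step I expect to be the genuine (if modest) obstacle is making the ergodicity argument clean: the ``spreading'' of cylinders under the odometer is intuitively obvious but needs a precise statement. The cleanest route is probably to note that $add$ is conjugate to translation by $1$ on the compact abelian group $\mathbb{Z}_2$ of dyadic integers, and $1$ is a topological generator of $\mathbb{Z}_2$, so $(\Sigma, add, \mu)$ is an ergodic (indeed uniquely ergodic, with $\mu$ the Haar measure) group rotation by the classical criterion that a rotation by $g$ on a compact group is ergodic with respect to Haar measure iff $g$ generates a dense subgroup. Alternatively, one can invoke Lemma~\ref{sca}: a minimal equicontinuous flow on a compact metric space is uniquely ergodic, and a uniquely ergodic system is ergodic with respect to its unique invariant measure — so once invariance of $\mu$ is established and one knows $\mu$ is \emph{the} invariant measure (which follows from unique ergodicity plus the fact that $\mu$ is $add$-invariant), ergodicity is immediate. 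Either packaging reduces the proof to citing standard results about compact group rotations, which is consistent with the excerpt's remark that the lemma is well known and left to the reader. Thus I would write a short proof: invariance by the monotone-class/cylinder computation above, and ergodicity by identifying $(\Sigma, add)$ with the dyadic-integer rotation (or by unique ergodicity via Lemma~\ref{sca}) and quoting the density-of-the-subgroup criterion.
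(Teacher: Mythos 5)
Your proposal is correct; note that the paper itself offers no proof of this lemma (it is explicitly left to the reader), so there is nothing to compare against, and your argument is exactly the kind of standard verification the author has in mind. The invariance half is fine: the first $n$ output coordinates of $add$ depend only on the first $n$ input coordinates, the induced map on $\Sigma_n$ is the cyclic permutation ``add $1$ bmod $2^n$'', so $add^{-1}$ carries $n$-cylinders to $n$-cylinders and the two measures $\mu$ and $\mu\circ add^{-1}$ agree on the generating $\pi$-system, hence everywhere. For ergodicity, your two clean routes (conjugacy with rotation by $1$ on the dyadic integers with Haar measure, or unique ergodicity of the minimal equicontinuous flow from Lemma~\ref{sca}) are both valid and complete. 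The one genuinely wrong step is in your first sketch: the claim that $\mu(B\cap add^{-j}B)\to\mu(B)^2$ along a subsequence of $j$ is false for the odometer, which is a group rotation and hence not weakly mixing --- for a single $n$-cylinder $B$ the quantity $\mu(B\cap add^{-j}B)$ only ever takes the values $0$ and $2^{-n}$, never $4^{-n}$. What is true, and what the elementary argument actually needs, is either the Ces\`aro-averaged version, or more simply the observation that invariance of $A$ forces $\mu(A\cap C)$ to be constant over the $2^n$ cylinders $C$ of level $n$ (since $add$ permutes them transitively), whence $\mu(A\cap C)=\mu(A)\mu(C)$ for all cylinders, hence $\mu(A\cap D)=\mu(A)\mu(D)$ for all Borel $D$, and $D=A^{c}$ gives $\mu(A)\in\{0,1\}$. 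Since you explicitly flag this step as the obstacle and do not rely on it in your final packaging, the proof you propose to write is sound.
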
 

In Case (II), we consider two different cases:
\begin{itemize}
\item[(a)] there is a point $w_{0}\in \Sigma$ such that $I_{w_{0}}=\{x_{w_{0}}\}$ contains only one point or 
\item[(b)] all $I_{w}$ for $w\in \Sigma$ are closed intervals.
\end{itemize}

In Case $(II)(a)$, we have that $\omega (x)=\omega (x_{w_{0}})$.   Without loss of generality, we assume $x=x_{w_{0}}$ and every $I_{w}=\{x_{w}\}$ contains only one point. Let 
$$
\tau_{n}=\max_{w_{n}\in \Sigma_{n}} |I_{w_{n}}|.
$$ 
We claim that $\tau_{n}\to 0$ as $n\to \infty$. We prove it by contradiction. 
If there is a subsequence $\{n_{k}\}_{k=1}^{\infty}$ 
of the natural numbers and a number $\epsilon_{0}>0$ such that $\tau_{n_{k}}\geq \epsilon_{0}$, then we have 
an interval $I_{w_{n_{k}}}\in \eta_{n_{k}}$ such that $|I_{w_{n_{k}}}| \geq \epsilon_{0}$. Since $\Sigma$ is a compact metric space, 
we have a convergent subsequence of $\{w_{n_{k}}\}_{k=1}^{\infty}$. Suppose $\{w_{n_{k}}\}_{k=1}^{\infty}$ is itself 
a convergent sequence with the limit point $w\in \Sigma$, then the interval $I_{w_{n_{k}}}$ converges to an interval $I_{w}$ as $k\to \infty$. 
The length $|I_{w}|$ is greater than $\epsilon_{0}$. This contradicts the fact $I_{w}$ contains only one point. This proves the claim. 

\medskip
\begin{lemma}~\label{IIa}
In Case $(II) (a)$, the sub-flow $\mathcal{K}$ in (\ref{subflow}) is equicontinuous, thus, MLS.
\end{lemma}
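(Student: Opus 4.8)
The plan is to show that the sub-flow $\mathcal{K}$ is \emph{semi-conjugate} to the adding machine flow $\mathcal{A}$ of Lemma~\ref{sca}, and then transport equicontinuity across the semi-conjugacy. Concretely, I would define a map $\pi: K\to \Sigma$ as follows: for each $y\in K$ and each $n\geq 1$, property 4) places $y$ in some interval $I_{n,k}$ of $\eta_n$ (if $y$ lies in several, pick any; this ambiguity is exactly what the claim $\tau_n\to 0$ will eliminate in the limit). Record the symbolic label $w_n(y)\in\Sigma_n$ of that interval. Property 3) guarantees these labels are consistent under the shifts $\sigma_n$, so they assemble into a point $\pi(y)=w(y)\in\Sigma$. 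Property 2) together with $f(I_w)=I_{add(w)}$ gives $\pi\circ f = add\circ\pi$, i.e. $\pi$ intertwines $f|K$ with $add$.

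The first key step is to verify that $\pi$ is well-defined and continuous; here the claim just proved — that $\tau_n=\max_{w_n}|I_{w_n}|\to 0$ — is essential. Indeed, if $y,y'\in K$ and $d(f\text{-orbit data})$ forces them into the same $I_{n,k}$ for all $n$ up to $N$, then $|y-y'|\le\tau_N\to 0$, so two points with $\pi(y)=\pi(y')$ must coincide, and moreover $|y-y'|$ small forces $w_N(y)=w_N(y')$ for large $N$ (using disjointness of the intervals in $\eta_N$, property 1), and the positive gaps between them for each fixed $N$). This simultaneously shows $\pi$ is injective and continuous, hence — since $K$ is compact and $\Sigma$ Hausdorff — a homeomorphism onto its image $\pi(K)$. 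Because $\mathcal{K}$ is minimal (as noted after (\ref{subflow})) and $add$ is minimal on $\Sigma$ (Lemma~\ref{sca}), the image $\pi(K)$ is a nonempty closed $add$-invariant subset of $\Sigma$, hence all of $\Sigma$; so $\pi$ is in fact a topological conjugacy between $\mathcal{K}$ and $\mathcal{A}$.

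The second key step is then immediate: equicontinuity is a topological-conjugacy invariant (a uniformly continuous conjugacy suffices, and on compact metric spaces every continuous conjugacy is uniformly continuous, as is its inverse). Since $\mathcal{A}$ is equicontinuous by Lemma~\ref{sca}, so is $\mathcal{K}$. Finally, an equicontinuous flow is MLS: given $\epsilon>0$, the $\delta$ from equicontinuity works with exceptional set $E=\emptyset$, whose upper density is $0<\epsilon$. This gives the conclusion of Lemma~\ref{IIa}.

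The main obstacle I anticipate is the continuity/injectivity argument for $\pi$ at points $y\in K$ that happen to lie in the \emph{boundary} shared by two adjacent intervals of some $\eta_n$ (or whose orbit does), where the label $w_n(y)$ is genuinely non-unique for that finite $n$. One must argue that such ambiguity cannot persist for all $n$: if $y\in I_{n,k}\cap I_{n,k'}$ with $k\ne k'$ for infinitely many $n$, one extracts (by compactness of $\Sigma$, exactly as in the proof of the claim) limit intervals $I_w\ne I_{w'}$ both containing $y$, contradicting $I_w=I_{w'}=\{x_w\}$ a singleton in Case $(II)(a)$. So in this case $\pi$ really is a well-defined function, and the rest goes through; handling this boundary bookkeeping cleanly — rather than any deep dynamical input — is where the care is needed.
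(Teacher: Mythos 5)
Your proposal is correct and follows essentially the same route as the paper: both identify $\mathcal{K}$ with the adding machine $\mathcal{A}$ via the symbolic coding (the paper writes the homeomorphism as $h(w)=x_w:\Sigma\to K$, you write its inverse $\pi:K\to\Sigma$), use $\tau_n\to 0$ plus compactness to get a homeomorphism, and transport equicontinuity through the conjugacy $f|K=h\circ add\circ h^{-1}$. Your worry about boundary ambiguity is moot since property 1) makes the intervals of each $\eta_n$ pairwise disjoint, but your resolution of it is harmless.
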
 

\begin{proof}
We can set a one-to-one and onto correspondence $h(w)=x_{w}$ from $\Sigma$ to $K$. 
Since $\tau_{n}\to 0$ as $n\to \infty$, for any $\epsilon >0$, there is a $N>0$ such that $\tau_{N}<\epsilon$.  
 For the given $N$, we can find a $\delta>0$ such that for any $w, w'\in \Sigma$ with $d(w, w')<\delta$, then $w_{N}=w_{N}'$. 
 This implies that both points $x_{w}$ and $x_{w'}$ are in the same interval $I_{w_{N}}$. So $|h(w)-h(w')|\leq \tau_{N}<\epsilon$. 
 This proves that $h$ is continuous. Since both $\Sigma$ and $K$ are compact, we have that $h^{-1}$ is also continuous. 
 Thus $h$ is a homeomorphism. Since $f|K = h\circ add \circ h^{-1}$ and since the flow $\mathcal{A}$ is equicontinuous, 
 the sub-flow $\mathcal{K}$ is equicontinuous.
\end{proof}

\medskip
\begin{lemma}~\label{IIama}
In Case $(II) (a)$,  $x$ is mean-attracted to $K$.
\end{lemma}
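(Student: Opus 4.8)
The plan is to exploit the tower $\{\eta_n\}_{n\ge 1}$ together with two facts already available: $\tau_n\to 0$, and that $\omega(x)=K$ forces $d(f^mx,K)\to 0$ (otherwise a subsequence of $(f^mx)_m$ would converge, by compactness of $I$, to a point of $\omega(x)=K$ lying at positive distance from $K$, which is absurd). Fix $\epsilon>0$ and choose $n$ with $\tau_n<\epsilon$; I will produce $z=z_n\in K$ with $\limsup_{N\to\infty}\frac1N\sum_{m=1}^N d(f^mx,f^mz)\le\tau_n$.

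First I would track the \emph{itinerary} of the orbit of $x$ through the $n$-th level of the tower. Write all cell indices modulo $2^n$ throughout, and let $g_n>0$ be the minimal gap between distinct intervals of $\eta_n$ (these are pairwise disjoint by~1)). Since $d(f^mx,K)\to 0$ and $K\subseteq\bigcup_k I_{n,k}$, for every $m$ past some threshold a nearest point $q_m\in K$ of $f^mx$ satisfies $d(f^mx,q_m)=d(f^mx,K)<\tfrac12 g_n$, and $q_m$ lies in a unique cell $I_{n,k_n(m)}$ of $\eta_n$. The key step is that this itinerary is eventually cyclic, i.e.\ $k_n(m+1)=k_n(m)+1$ for all large $m$: by~2) and $f(K)\subseteq K$ we have $f(q_m)\in f(I_{n,k_n(m)}\cap K)\subseteq I_{n,k_n(m)+1}$, while $d(f^{m+1}x,f(q_m))\to 0$ by uniform continuity of $f$ on $I$, so $d(q_{m+1},I_{n,k_n(m)+1})\le d(f^{m+1}x,K)+d(f^{m+1}x,f(q_m))<g_n$ for all large $m$; since $q_{m+1}$ already lies in some cell of $\eta_n$ and distinct cells are at least $g_n$ apart, necessarily $q_{m+1}\in I_{n,k_n(m)+1}$, i.e.\ $k_n(m+1)=k_n(m)+1$. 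Let $M_n$ be a threshold beyond which all of this holds.

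Now I would choose $z_n\in K$ with $(f|K)^{M_n}(z_n)=q_{M_n}$, which is legitimate because $f|K$ is a homeomorphism via the conjugacy $f|K=h\circ add\circ h^{-1}$ of Lemma~\ref{IIa}. Then for every $j\ge 0$, iterating~2) and $f(K)\subseteq K$ gives $f^{M_n+j}(z_n)=f^j(q_{M_n})\in I_{n,k_n(M_n)+j}=I_{n,k_n(M_n+j)}$, which is exactly the cell of $\eta_n$ containing $q_{M_n+j}$; as that cell has length $\le\tau_n$ we get $d(f^{M_n+j}z_n,q_{M_n+j})\le\tau_n$, and hence
\[
d(f^mx,f^mz_n)\le d(f^mx,q_m)+d(q_m,f^mz_n)\le d(f^mx,K)+\tau_n\qquad(m\ge M_n).
\]
Averaging over $m\le N$: the first $M_n$ terms contribute at most $(b-a)M_n/N\to 0$, the term $\frac1N\sum_{m\le N}d(f^mx,K)\to 0$ since it is the Ces\`aro mean of a null sequence, and the remaining $\tau_n$'s contribute at most $\tau_n$. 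Thus $\limsup_{N\to\infty}\frac1N\sum_{m=1}^N d(f^mx,f^mz_n)\le\tau_n<\epsilon$, so $z=z_n$ witnesses that $x$ is mean-attracted to $K$. The only real obstacle is the eventual cyclicity of $k_n$: that is where the pairwise disjointness of the cells, the relation $f(I_{n,k}\cap K)\subseteq I_{n,k+1}$, uniform continuity of $f$, and $d(f^mx,K)\to 0$ all have to be combined with care; everything afterward is routine averaging.
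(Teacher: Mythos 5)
Your argument is correct, and at the global level it follows the same strategy as the paper's proof: fix a level $n_{0}$ with $\tau_{n_{0}}<\epsilon$, use the surjectivity of $f|K$ (via the conjugacy $f|K=h\circ add\circ h^{-1}$ from Lemma~\ref{IIa}) to choose $z\in K$ whose orbit is synchronized with that of $x$ relative to the cycle of intervals $\eta_{n_{0}}$, propagate the synchronization forever using $f(I_{n_{0},k})=I_{n_{0},k+1 \pmod{2^{n_{0}}}}$, and then average. Where you genuinely diverge is at the synchronization step, and the divergence is instructive. The paper asserts that some iterate $f^{m_{0}}x$ lands \emph{inside} one of the closed intervals $I_{w_{n_{0}}}$, which lets it conclude $|f^{m_{0}+n}x-f^{m_{0}+n}z|\leq\tau_{n_{0}}$ for all $n\geq 0$ at once; note that property 4) only guarantees that $K=\omega(x)$ is covered by $\cup_{k}I_{n_{0},k}$, not that the orbit of $x$ ever enters this union, so that assertion requires an extra appeal to the construction of the cycles of intervals. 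You avoid the issue entirely: you work with nearest points $q_{m}\in K$ to $f^{m}x$, prove that their itinerary through $\eta_{n_{0}}$ is eventually cyclic by exploiting the positive gap $g_{n_{0}}$ between the finitely many pairwise disjoint cells together with uniform continuity of $f$, and then absorb the extra error $d(f^{m}x,K)$ into the Ces\`aro average as a null sequence. The price is the eventual-cyclicity lemma and one additional term in the average; what it buys is a self-contained argument that uses only $\omega(x)=K$ and the stated properties 1)--4). Both proofs are valid; yours is the more robust at that one step.
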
 

\begin{proof}
Since $K \subset \cap_{n=1}^{\infty} \cup_{w_{n}\in \Sigma_{n}} I_{w_{n}}$ 
and since $\tau_{n}\to 0$ as $n\to \infty$, for any $\epsilon >0$, there is an integer $n_{0}>0$ such that $\tau_{n_{0}} <\epsilon$. Then we have an integer $m_{0}>0$ such that 
$f^{m_{0}}x \in I_{w_{n_{0}}}$ for some $w\in \Sigma$. Since $K\cap I_{w_{n_{0}}}\not=\emptyset$ and $f|K =h\circ add\circ h^{-1}$ is a homeomorphism, 
we have $z\in K$ such that $f^{m_{0}}z\in K\cap I_{w_{n_{0}}}$. This implies that 
$$
|f^{m_{0}}x-f^{m_{0}}z|\leq \tau_{n_{0}}<\epsilon.
$$ 

Since $f^{m_{0}+n} (I_{w_{n_{0}}})\in \eta_{n_{0}}$ for all $n\geq 0$, we have that 
$$
|f^{m_{0}+n}x-f^{m_{0}+n}z|\leq \tau_{n_{0}}<\epsilon
$$ 
for all $n\geq 0$. 
This implies that
$$
    \limsup_{N\to\infty} \frac{1}{N} \sum_{n=1}^{N} |f^n x-f^n z| <\epsilon.
$$
We proved that $x$ is mean-attracted to $K$.
\end{proof}

In Case $(II) (b)$, we consider a fixed interval $I_{w_{0}}$. Let 
$$\epsilon_{0}=|I_{w_{0}}|>0. 
$$
The orbit $\{ add^{n} (w_{0})\; |\; n\in\mathbb{Z}\}$ is dense in $\Sigma$. 
Then we can find two points $u, v\in K$ such that the distance $|u-v|$ is arbitrary small and such that  
the interval $I_{uv}=[u,v]$ contains infinitely many positive integers $n$ such that $I_{add^{-n}(w_{0})} \subset I_{uv}$. 
Then we have  
$$
f^{n}(I_{xy})\supset f^{n}(I_{add^{-n}(w_{0})})= I_{w_{0}}.
$$ 
Thus $|f^{n}x-f^{n}y| \geq \epsilon_{0}$ for infinitely many positive integers $n$.
This proves that

\medskip
\begin{lemma}~\label{IIbeq}
In Case (II)(b), the sub-flow $\mathcal{K}$ in (\ref{subflow}) is not equicontinuous.
\end{lemma}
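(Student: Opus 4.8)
The plan is to refute equicontinuity of $\mathcal{K}$ by producing a separating pair: for every $\delta>0$ I will exhibit $u,v\in K$ with $|u-v|<\delta$ and infinitely many $n$ with $|f^{n}u-f^{n}v|\geq\epsilon_{0}$, where $\epsilon_{0}=|I_{w_{0}}|>0$ is the length fixed at the start of Case~(II)(b); this is exactly the negation of equicontinuity (take $\epsilon=\epsilon_{0}$). Two facts are already in hand. First, by Lemma~\ref{sca} the adding machine $\mathcal{A}$ on $\Sigma$ is minimal, hence so is $(\Sigma,add^{-1})$, so the backward orbit $\{add^{-n}(w_{0}):n\geq1\}$ enters every cylinder $[\xi]_{m}$ for infinitely many $n$. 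Second, iterating the covariance $f(I_{w})=I_{add(w)}$ gives $f^{n}\big(I_{add^{-n}(w_{0})}\big)=I_{w_{0}}$ for all $n\geq1$, so $f^{n}$ carries any interval containing $I_{add^{-n}(w_{0})}$ onto a set containing $I_{w_{0}}$.

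For the construction I would first record that $K$ meets every $I_{w}$: since $f(K)=K$, $K\subseteq\bigcup_{k}I_{m,k}$, and $f$ cyclically permutes $\eta_{m}$, none of the $2^{m}$ relatively clopen pieces $K\cap I_{\xi}$ ($\xi\in\Sigma_{m}$) is empty, and the same permutation argument gives $f^{n}(K\cap I_{\xi})=K\cap I_{add_{m}^{n}(\xi)}$. Since the convex hulls $\mathrm{conv}(K\cap I_{\xi})\subseteq I_{\xi}$ are pairwise disjoint, $\sum_{\xi\in\Sigma_{m}}\mathrm{diam}(K\cap I_{\xi})\leq|I|$, so for $m$ large enough there is $\xi\in\Sigma_{m}$ with $\mathrm{diam}(K\cap I_{\xi})\leq|I|/2^{m}<\delta$. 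Put $u=\min(K\cap I_{\xi})$, $v=\max(K\cap I_{\xi})$, so $|u-v|<\delta$. Because $add$ has no periodic points, $add^{-n}(w_{0})$ can equal the spatially leftmost (resp. rightmost) infinite word of $[\xi]_{m}$ for at most one $n$; discarding those, infinitely many $n$ remain with $add^{-n}(w_{0})\in[\xi]_{m}$ and with $I_{add^{-n}(w_{0})}$ lying spatially strictly between two intervals $I_{w'},I_{w''}$, $w',w''\in[\xi]_{m}$. For each such $n$, $u\leq\sup I_{w'}<\inf I_{add^{-n}(w_{0})}$ and $v\geq\inf I_{w''}>\sup I_{add^{-n}(w_{0})}$, hence $[u,v]\supseteq I_{add^{-n}(w_{0})}$ and therefore $f^{n}([u,v])\supseteq I_{w_{0}}$.

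The step I expect to be the crux is the last implication, $f^{n}([u,v])\supseteq I_{w_{0}}\ \Rightarrow\ |f^{n}u-f^{n}v|\geq\epsilon_{0}$: a priori $f^{n}$ may fold $[u,v]$, so the images of the endpoints need not straddle $I_{w_{0}}$. I would handle it by tracking the extreme sub-blocks: $u\in I_{\hat w}$ and $v\in I_{\check w}$, where $\hat w,\check w$ are the spatially extreme infinite words of $[\xi]_{m}$, which differ from one another first at coordinate $m$; since adding $n$ feeds the same carry into coordinate $m$ on both, $add^{n}(\hat w)$ and $add^{n}(\check w)$ still differ there, so $f^{n}u$ and $f^{n}v$ occupy distinct children of a common level-$m$ block, hence are at a positive distance apart. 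The genuinely delicate point — which the terse computation preceding the statement glosses over — is to make that separation at least $\epsilon_{0}$ uniformly in $\delta$; here I would exploit that in Case~(II)(b) the factor map $\pi\colon K\to\Sigma$ ($\pi(y)=w$ iff $y\in I_{w}$) is continuous, each $K\cap I_{\xi}$ being clopen, and choose $u,v$ so that at a suitable time — which, by $f^{n}(K\cap I_{\xi})=K\cap I_{add_{m}^{n}(\xi)}$, recurs infinitely often — their images land in the two extreme points of the positive-diameter fiber over $w_{0}$. Carrying this out over those times and letting $\delta\to0$ gives the failure of equicontinuity of $\mathcal{K}$.
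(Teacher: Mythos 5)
Your overall strategy is the paper's: fix $I_{w_{0}}$ of length $\epsilon_{0}>0$, produce $u,v\in K$ arbitrarily close with $I_{add^{-n}(w_{0})}\subset[u,v]$, and push forward by $f^{n}$. Your construction of the pair $u,v$ (a small-diameter piece $K\cap I_{\xi}$, discarding the at most two $n$ for which $add^{-n}(w_{0})$ codes a spatially extreme block of $[\xi]_{m}$) is a sound way to make precise the paper's ``we can find two points,'' and you are right to flag the final implication as the crux: $f^{n}([u,v])\supseteq I_{w_{0}}$ does not by itself bound $|f^{n}u-f^{n}v|$ from below, since $f^{n}$ may fold $[u,v]$ so that both endpoint images land on the same side of $I_{w_{0}}$.

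Your repair of that step, however, does not close the gap. The carry argument shows only that $add^{n}(\hat w)$ and $add^{n}(\check w)$ still differ at coordinate $m$, hence that $f^{n}u$ and $f^{n}v$ lie in the two distinct level-$(m+1)$ blocks inside a common level-$m$ block; those two blocks, and the gap between them, can be arbitrarily short and they shrink as $\delta\to 0$, so no lower bound independent of $\delta$ comes out. Your final suggestion --- pull $u,v$ back from ``the two extreme points of the positive-diameter fiber over $w_{0}$'' via $f^{n}(K\cap I_{\xi})=K\cap I_{add_{m}^{n}(\xi)}$ --- is the right mechanism, but it silently assumes that the fiber $K\cap I_{w_{0}}$ has positive diameter, which is not the same as $|I_{w_{0}}|=\epsilon_{0}>0$ and is nowhere established. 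This is not a detail: if every fiber $K\cap I_{w}$ were a singleton, the coding map $\pi\colon K\to\Sigma$ would be a continuous bijection of compact spaces, hence a homeomorphism conjugating $f|K$ to the adding machine, and $\mathcal{K}$ \emph{would} be equicontinuous --- so the positive-diameter claim carries the entire content of the lemma. It must be extracted from the construction of the covers $\eta_{n}$ (one may take each $I_{n,k}$ to be the convex hull of $K\cap I_{n,k}$, so that the endpoints of every $I_{w}$ lie in $K$ and $\mathrm{diam}(K\cap I_{w_{0}})=\epsilon_{0}$). Once that is in hand, choosing $n\geq 1$ with $add_{m}^{n}(\xi)=(w_{0})_{m}$ and pulling the two endpoints of $I_{w_{0}}$ back through $f^{n}(K\cap I_{\xi})=K\cap I_{(w_{0})_{m}}$ finishes the proof directly, bypassing the covering relation $f^{n}([u,v])\supseteq I_{w_{0}}$ altogether.
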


The main work in this paper is to prove the sub-flow $\mathcal{K}$ in (\ref{subflow}) is still MLS in Case (II)(b).  

\medskip
\begin{lemma}~\label{IIb}
In Case $(II) (b)$, the sub-flow (\ref{subflow})  is MLS.
\end{lemma}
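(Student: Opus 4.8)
The plan is to use the rigid combinatorial structure of the tower $\{\eta_{N}\}_{N\in\mathbb{N}}$ instead of trying to track individual orbits. The point is that $f$ cyclically permutes the $2^{N}$ pairwise disjoint closed intervals of $\eta_{N}$, so two points of $K$ lying in a common interval $I_{N,k}$ stay in a common interval of $\eta_{N}$ under \emph{all} iterates of $f$, and over one period of length $2^{N}$ their enclosing interval runs exactly once through each interval of $\eta_{N}$. Since the $2^{N}$ intervals of $\eta_{N}$ are pairwise disjoint subintervals of $[a,b]$, their lengths sum to at most $b-a$, so a pigeonhole estimate shows that at most $(b-a)/\epsilon$ of them can have length $\geq\epsilon$; hence only a set of times of density $\leq (b-a)/(2^{N}\epsilon)$ can separate the two orbits by $\epsilon$, and taking $N$ large makes this density as small as we wish.

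In detail I would proceed as follows. Fix $\epsilon>0$ and pick $N$ so large that $(b-a)/(2^{N}\epsilon)<\epsilon$. Iterating property 2) of $\eta_{N}$ gives $f^{n}(I_{N,k})=I_{N,(k+n)\bmod 2^{N}}$ for all $n\geq 0$, so if $y,z\in I_{N,k}$ then $f^{n}y,f^{n}z\in I_{N,(k+n)\bmod 2^{N}}$, and therefore $|f^{n}y-f^{n}z|\leq |I_{N,(k+n)\bmod 2^{N}}|$. As $n$ runs over $\{0,1,\dots,2^{N}-1\}$ the index $(k+n)\bmod 2^{N}$ runs over all of $\{0,\dots,2^{N}-1\}$, and since $\sum_{j=0}^{2^{N}-1}|I_{N,j}|\leq b-a$ (the $I_{N,j}$ being pairwise disjoint subintervals of $I=[a,b]$), the number of such $n$ with $|I_{N,(k+n)\bmod 2^{N}}|\geq\epsilon$ is at most $(b-a)/\epsilon$. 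Because the sequence $n\mapsto |I_{N,(k+n)\bmod 2^{N}}|$ has period $2^{N}$, the set $E_{y,z}=\{n\in\mathbb{N}:|f^{n}y-f^{n}z|\geq\epsilon\}$ satisfies $\overline{D}(E_{y,z})\leq \big((b-a)/\epsilon\big)/2^{N}<\epsilon$.

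It remains to exhibit a $\delta>0$ depending only on $\epsilon$ (equivalently on $N$) that forces $y$ and $z$ into a common $I_{N,k}$. Since the intervals of $\eta_{N}$ are finitely many, closed, and pairwise disjoint, the separation $\gamma_{N}=\min_{j\neq k}\mathrm{dist}(I_{N,j},I_{N,k})$ is positive; set $\delta=\gamma_{N}$. By property 4), every point of $K$ lies in some interval of $\eta_{N}$, so if $y,z\in K$ with $|y-z|<\delta$ then $y$ and $z$ cannot lie in two different intervals of $\eta_{N}$ and hence lie in a common $I_{N,k}$; the previous paragraph then gives $\overline{D}(E_{y,z})<\epsilon$. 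This proves that $\mathcal{K}$ is MLS.

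I do not expect a serious obstacle: once one replaces ``control each orbit'' by ``control the enclosing $\eta_{N}$-interval'', everything reduces to the elementary fact that disjoint subintervals of $[a,b]$ of total length at most $b-a$ can contain only few long ones, combined with the exact periodicity of the combinatorics. The one place that needs a little care is the \emph{uniformity} of $\delta$ — that a single $\delta$ works for all pairs $y,z\in K$ at once and that $K$ meets none of the gaps between the intervals of $\eta_{N}$ — and this is exactly what the positivity of $\gamma_{N}$ and property 4) provide. (The argument uses only the tower $\{\eta_{N}\}$ and not the dichotomy (a)/(b), so it simultaneously re-establishes the MLS conclusion of Lemma~\ref{IIa}.)
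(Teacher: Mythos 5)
Your proof is correct, and it takes a genuinely more elementary route than the paper's. The paper's argument passes to the limit fibers $I_{w}=\cap_{n}I_{w_{n}}$, observes that at most $m_{0}\leq |I|/\epsilon+1$ points $w^{j}\in\Sigma$ can carry a fiber of length $\geq\epsilon$, isolates the corresponding ``fat'' cylinders at a deep level $N_{1}$, and then invokes the Birkhoff ergodic theorem for the $add$-invariant measure $\mu$ to bound the density of visits of the cylinder containing $[u,v]$ to those fat cylinders by $m_{0}\,\mu([w]_{N_{1}})<\epsilon$. You instead work entirely at one finite level $N$ of the tower: the pigeonhole count that at most $(b-a)/\epsilon$ of the $2^{N}$ pairwise disjoint intervals of $\eta_{N}$ can have length $\geq\epsilon$, combined with the exact $2^{N}$-periodicity of the cyclic permutation of $\eta_{N}$, yields the density bound $\overline{D}(E_{y,z})\leq\big((b-a)/\epsilon\big)/2^{N}<\epsilon$ directly, with no measure theory, no coding into the adding machine, and no use of the dichotomy between cases (a) and (b) --- so, as you note, your argument also subsumes the MLS conclusion of Lemma~\ref{IIa} (though not its stronger equicontinuity statement). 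Your treatment of the uniformity of $\delta$ is also sound: taking $\delta=\gamma_{N}$, the minimal gap between the finitely many closed, pairwise disjoint intervals of $\eta_{N}$, together with property 4) (which places all of $K$ inside $\cup_{k}I_{N,k}$), forces two $\delta$-close points of $K$ into a common $I_{N,k}$, after which property 2) keeps their iterates in a common interval of $\eta_{N}$ forever. What the paper's heavier machinery buys is a framework (the measure $\mu$, the ergodic theorem, the fibers $I_{w}$) that is reused almost verbatim in the proof of mean-attraction (Lemma~\ref{IIbma}); what your approach buys is a shorter, self-contained proof of the MLS property itself.
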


\begin{proof}
Since the probability measure $\mu$ is $add$-invariant and ergodic, the Birkhoff ergodic theorem implies that 
$$
\lim_{N\to \infty}  \frac{\#(\{ 1\leq k\leq N  \;|\; add^{k}(w)\in A\})}{N} 
$$
$$
=\lim_{N\to \infty}  \frac{\#(\{ 1\leq k\leq N \;|\; add^{-k}(w)\in A\})}{N} = \mu (A).
$$
for any $A\in \mathcal{B}$ and $\mu$-a.e. $w\in \Sigma$.

For any $\epsilon >0$, since $I_{w}\cap I_{w'}=\emptyset$ for all $w\not= w'\in \Sigma$ 
and since $\sum_{w\in \Sigma} |I_{w}| \leq |I|<\infty$, there is an integer $N_{0}> |I|/\epsilon+1$ and
$0\leq m_{0} \leq  |I|/\epsilon +1$ points, denoted as 
$w^{j}$, $1\leq j\leq m_{0}$, in $\Sigma$ such that for any $w\not=w^{j}\in \Sigma$ for all $1\leq j\leq m_{0}$ and $n\geq N_{0}$, 
$|I_{w_{n}}| < \epsilon$. 

Let $N_{1}>N_{0}$ be an integer such that $\mu ([w]_{N_{1}}) <\epsilon/m_{0}$ for all $w\in \Sigma$. 
We have a number $\delta >0$ such that for any $u, v\in K$ with $|u-v|<\delta$, the interval $[u,v] \subset I_{w^{u,v}_{N_{1}}}$
for some $w^{u,v}\in \Sigma$. For any given $|u-v|<\delta$ and for each $1\leq j\leq m_{0}$, define
$$
E_{j}=\{k\in \mathbb{N}\;|\; add^{-k}([w^{j}]_{N_{1}})= [w^{u,v}]_{N_{1}}\}.
$$ 
From the Birkhoff ergodic theorem (see the formula in the beginning of the proof), we have that 
\begin{equation}~\label{den}
\lim_{N\to \infty} \frac{\#(E_{j}\cap [1,N])}{N} =\mu ([w^{u,v}]_{N_{1}}) < \frac{\epsilon}{m_{0}}.
\end{equation}
Let $E=\cup_{j=1}^{m_{0}} E_{j}$. Then we have that 
\begin{equation}~\label{dent1}
\lim_{N\to \infty} \frac{\#(E\cap [1,N])}{N}  \leq \lim_{N\to \infty} \sum_{j=1}^{m_{0}} \frac{\#(E_{j}\cap [1,N])}{N}  < \epsilon.
\end{equation}
Thus we have that the uppe density $\overline{D}(E)$ is less than $\epsilon$.

For any $n\in \mathbb{N}\setminus E$, $add^{n} ([w^{u,v}]_{N_{1}}) \not=  [w^{j}]_{N_{1}}$ for all $1\leq j\leq m_{0}$. 
This implies $f^{n}(u), f^{n}(v)\in I_{w_{N_{1}}}$ for $w\not= w^{j}$ for all $1\leq j\leq m_{0}$. Thus, $|f^{n}(u)-f^{n}(v)| <\epsilon$.  
This completes the proof of the lemma. 
 \end{proof}

\medskip
\begin{lemma}~\label{IIbma}
In Case $(II) (b)$,  $x$ is mean-attracted to $K$.
\end{lemma}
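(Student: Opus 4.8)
\textbf{Proof proposal for Lemma~\ref{IIbma}.}

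The plan is to mimic the structure of the proof of Lemma~\ref{IIama}, but to replace the statement ``$\tau_n\to 0$'' (which fails in Case (II)(b) since some $I_w$ are genuine intervals) by a measure-theoretic smallness statement coming from the fact that $\sum_{w\in\Sigma}|I_w|\le |I|<\infty$, exactly as in the proof of Lemma~\ref{IIb}. First I would fix $\epsilon>0$ and, arguing as at the start of the proof of Lemma~\ref{IIb}, choose an integer $N_0$ and finitely many ``bad'' points $w^1,\dots,w^{m_0}\in\Sigma$ (with $m_0\le |I|/\epsilon+1$) such that for every $w\notin\{w^1,\dots,w^{m_0}\}$ and every $n\ge N_0$ one has $|I_{w_n}|<\epsilon$. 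Then I would pick $N_1>N_0$ with $\mu([w]_{N_1})<\epsilon/m_0$ for all $w\in\Sigma$.

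Next, since $x\in I$ and $\omega(x)=K\subset\bigcup_{w\in\Sigma}I_w$, and since $f(I_w)=I_{add(w)}$, the forward orbit of $x$ eventually enters the union $\bigcup_{w_{N_1}\in\Sigma_{N_1}}I_{w_{N_1}}$ (indeed, because $\eta_n$ is a finite cover of a neighborhood of $K$ by intervals mapped cyclically, and the complementary ``gaps'' are wandering, the orbit of $x$ is absorbed into the tower $\bigcup_k I_{w_{N_1}}$); say $f^{m_0'}x\in I_{u_{N_1}}$ for some $u\in\Sigma$ and some $m_0'\ge 0$. Since $f|K=h\circ add\circ h^{-1}$ is onto $K$ and $K\cap I_{u_{N_1}}\ne\emptyset$, I can choose $z\in K$ with $f^{m_0'}z\in K\cap I_{u_{N_1}}$. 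Because $f^{m_0'+n}(I_{u_{N_1}})=I_{add^n(u)_{N_1}}\in\eta_{N_1}$ for all $n\ge 0$, both $f^{m_0'+n}x$ and $f^{m_0'+n}z$ lie in the common interval $I_{add^n(u)_{N_1}}$, whose length is $<\epsilon$ \emph{unless} $add^n(u)\in\{w^1,\dots,w^{m_0}\}$, i.e. unless $n$ lies in the exceptional set $E=\bigcup_{j=1}^{m_0}\{n\in\mathbb N: add^n(u)\in[w^j]_{N_1}\}$. By the Birkhoff ergodic theorem for $(\Sigma,add,\mu)$, applied along the forward orbit of $u$ (here I would use that $\mu$-a.e.\ point is generic; if $u$ happens not to be generic, replace $u$ by a generic point in the same $N_1$-cylinder, which does not change the cylinder membership), $E$ has upper density at most $\sum_{j=1}^{m_0}\mu([w^j]_{N_1})<m_0\cdot(\epsilon/m_0)=\epsilon$. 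On the bounded interval $I$ the integrand $|f^nx-f^nz|$ is at most $|I|$, so splitting the Ces\`aro average over $n\in E$ and $n\notin E$ gives
\begin{equation*}
\limsup_{N\to\infty}\frac1N\sum_{n=1}^N|f^nx-f^nz|\le \epsilon + |I|\cdot\overline D(E)\le (1+|I|)\epsilon .
\end{equation*}
Since $\epsilon>0$ is arbitrary (and $z=z_{\epsilon,x}$ is allowed to depend on $\epsilon$), this shows $x$ is mean-attracted to $K$.

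I expect the main obstacle to be the bookkeeping in the two preliminary steps, i.e.\ making precise (i) that the forward orbit of $x$ is eventually trapped in the tower $\bigcup_{w_{N_1}}I_{w_{N_1}}$ — this needs the structure of Case (II), namely that $K=\omega(x)$ forces the orbit to spend all but finitely much time near $K$, together with property (2) of the $\eta_n$ — and (ii) the harmless replacement of a possibly non-generic coding $u$ by a generic one within the same cylinder, so that the Birkhoff-type density bound (\ref{den}) genuinely applies to the exceptional set $E$ attached to $x$. Once these are in place, the estimate is the same two-part Ces\`aro split used in Lemma~\ref{IIb}, now weighted by the diameter bound $|I|$ on the at-most-$\epsilon$-density bad set; everything else is routine.
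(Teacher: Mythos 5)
Your proposal is correct and follows essentially the same route as the paper's proof: the same finitely-many-bad-cylinders reduction from $\sum_{w}|I_w|\le|I|$, the same exceptional set $E$ of density $<\epsilon$ (times a constant) controlled via the adding machine, and the same Ces\`aro split with the crude bound $|I|$ on the bad set; the paper merely uses $\epsilon/3$-bookkeeping and a three-term split to land exactly at $\epsilon$, and it too asserts without further argument that the orbit of $x$ enters a prescribed $N_1$-level interval. Your worry about genericity of $u$ is harmless but unnecessary, since $add$ permutes the level-$N_1$ cylinders cyclically, so membership of $add^n(u)$ in $[w^j]_{N_1}$ depends only on $u_{N_1}$ and the density of $E$ is exact.
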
 

\begin{proof}
We use the same notation as that in the proof of Lemma~\ref{IIb}.
For any $\epsilon >0$, since $I_{w}\cap I_{w'}=\emptyset$ for all $w\not= w'\in \Sigma$ 
and since $\sum_{w\in \Sigma} |I_{w}| \leq |I|<\infty$, there is an integer $N_{0}> |I|/\epsilon+1$ and
$0\leq m_{0} \leq  |I|/\epsilon +1$ points, denoted as 
$w^{j}$, $1\leq j\leq m_{0}$, in $\Sigma$ such that for any $w\not=w^{j}\in \Sigma$ for all $1\leq j\leq m_{0}$ and $n\geq N_{0}$, 
$|I_{w_{n}}| < \epsilon/3$. 

Let $N_{1}>N_{0}$ be an integer such that $\mu ([w]_{N_{1}}) <\epsilon/(3|I|m_{0})$ for all $w\in \Sigma$. 
Fix an interval $I_{w_{N_{1}}'}$ with length less than $\epsilon$. There is an integer $k\geq 0$ such that $f^{k}x\in I_{w_{N_{1}}'}$. Take 
$z_{0}\in I_{w_{N_{1}}'}\cap K$. Then we have a point $z=z_{x, \epsilon}\in I_{(add^{-k}(w'))_{N_{1}}}\cap K$ such that $f^{k}z=z_{0}$. Thus we have that $|f^kx-f^kz|<\epsilon$.

Define
$$
E_{j}=\{k\in \mathbb{N}\;|\; add^{-k}([w^{j}]_{N_{1}})= [w']_{N_{1}}\}.
$$ 
From the Birkhoff ergodic theorem (see the formula in the beginning of the proof of Lemma~\ref{IIb}), we have that 
\begin{equation}~\label{den}
\lim_{N\to \infty} \frac{\#(E_{j}\cap [1,N])}{N} =\mu ([w']_{N_{1}}) < \frac{\epsilon}{3|I|m_{0}}.
\end{equation}
Let $E=\cup_{j=1}^{m_{0}} E_{j}$. Then we have that 
\begin{equation}~\label{dent}
\lim_{N\to \infty} \frac{\#(E\cap [1,N])}{N}  \leq \lim_{N\to \infty} \sum_{j=1}^{m_{0}} \frac{\#(E_{j}\cap [1,N])}{N}  <\frac{\epsilon}{3|I|}.
\end{equation}

Consider the sum
$$
\frac{1}{N} \sum_{n=1}^{N} |f^{n}x-f^{n}z| = \frac{1}{N} \sum_{n=1}^{k} |f^{n}x-f^{n}z| 
$$
$$+ \frac{1}{N} \sum_{n\in \mathbb{N}, k+n\in \mathbb{N}\setminus E} |f^{k+n}x-f^{k+n}z| +\frac{1}{N} \sum_{n\in \mathbb{N}, k+n\in E} |f^{k+n}x-f^{k+n}z|
$$
$$
=I+II+III.
$$

First for $N$ large, we have $I<\epsilon/3$.

Secondly, for any $n\in \mathbb{N}$ such that $k+n\in \mathbb{N}\setminus E$, 
$$
[w]_{N_{1}}=add^{k+n} ([w']_{N_{1}}) \not= [w^{j}]_{N_{1}}.
$$ 
This implies 
$f^{k+n}x, f^{k+n}z\in I_{w_{N_{1}}}$ for $w\not= w^{j}$ for all $1\leq j\leq m_{0}$. Thus, 
$$
|f^{k+n}x-f^{k+n}z| <\frac{\epsilon}{3}, \;\; \forall \; n\in \mathbb{N} \hbox{ and } k+n\in \mathbb{N}\setminus E.
$$
This implies that $II<\epsilon/3$. 

Finally, since $|f^{k+n}x-f^{k+n}z|\leq |I|$ is always true, we have  
$$
III \leq \frac{\# (E\cap [1, N])|I|}{N}.
$$ 
The upper density of $E$ given in (\ref{dent}) implies that 
$$
\limsup_{N\to \infty} III < \frac{\epsilon}{3}.
$$ 
Thus we get eventually 
$$
\limsup_{N\to \infty} \frac{1}{N} \sum_{n=1}^{N} |f^{n}x-f^{n}z| <\epsilon.
$$
This proves that $x$ is mean-attracted to $K$.
\end{proof}

Lemmas~\ref{I}, ~\ref{IIa},~\ref{IIama},~\ref{IIb}, and~\ref{IIbma} complete the proof of Theorem~\ref{mainthm}.

\medskip
\medskip

 \end{document}